\documentclass[12pt,reqno,a4paper]{article}

\usepackage{amsmath,amsthm,amssymb}
\usepackage{authblk}
\usepackage{color}
\usepackage{soul,xcolor}

\usepackage{enumitem}

\usepackage[mathscr]{euscript}
\usepackage[T1]{fontenc}
\usepackage{graphics}
\usepackage{fullpage}
\usepackage[backref,colorlinks,linkcolor=blue]{hyperref}
\usepackage{mathtools}
\usepackage{multicol}

\usepackage[colorinlistoftodos,prependcaption]{todonotes}
\usepackage[all]{xy}

\newtheorem{theorem}{Theorem}[section]
\newtheorem{corollary}[theorem]{Corollary}
\newtheorem{lemma}[theorem]{Lemma}
\newtheorem{proposition}[theorem]{Proposition}

\newtheorem*{conjecture}{Conjecture}
\theoremstyle{definition}
\newtheorem{definition}[theorem]{Definition}

\newtheorem{example}[theorem]{Example}
\theoremstyle{remark}
\newtheorem{remark}[theorem]{Remark}

\numberwithin{equation}{section}
\numberwithin{figure}{section}

\newcommand{\B}[1]{{\mathbf #1}}
\newcommand{\C}[1]{{\mathscr #1}}

\newcommand{\AL}[1]{\textcolor{red}{#1}}

\begin{document}

\title{Torsion in asymptotic cones of free groups}
\author{Jarek K\k{e}dra}
\affil{
University of Aberdeen and University of Szczecin\\
{\tt kedra@abdn.ac.uk}
}
\author{Assaf Libman}
\affil{
University of Aberdeen\\
{\tt a.libman@abdn.ac.uk}
}
\author{Zipei Nie}
\affil{
University of Illinois\\
{\tt znie@illinois.edu}
}

\maketitle

\begin{abstract}
We prove that the asymptotic cone of any non-abelian free group equipped with the bi-invariant word metric has elements of order $2$.
\end{abstract}

\section{Introduction}\label{S:intro}

\paragraph{Bi-invariant metrics.}
Recall that a metric $d$ on a group $G$ is called {\em bi-invariant} if for any $f,g,h\in G$
$$
d(fg,fh) = d(gf,hf) = d(g,h).
$$
In other words, the actions of $G$ on itself by both the left and the right multiplication are by isometries of $d$.


\paragraph{Asymptotic cones.}
Let $\omega$ be a non-principal ultrafilter on the set of positive integers.
It allows us to choose in a canonical way the limit of a convergent subsequence in any bounded sequence of real numbers.

The asymptotic cone of a metric space $(X,d)$ with respect to $\omega$ (and the scaling sequence $s_n=n$) is defined as follows, see \cite{zbMATH06866242} for more details.
Define a pseudo-metric on the set of sequences $(x_n)$ in $X$ for which $\{\tfrac{d(x_n,x_1)}{n}: n \geq 1 \}$ is bounded by 
\[
d_\omega((x_n),(y_n)) = \lim_\omega \frac{d(x_n,y_n)}{n}.
\]
The equivalence classes under the relation $(x_n) \sim (y_n) \iff d_\omega((x_n),(y_n))=0$ are the elements of ${\rm Cone}_\omega(X,d)$ which is equipped with the metric $d_\omega([x_n],[y_n])=d_\omega((x_n),(y_n))$.

It is well known that the asymptotic cone ${\rm Cone}_{\omega}(G,d)$ of a group $G$ equipped with a bi-invariant metric $d$ is itself a complete metric group where the group structure is defined by $[g_n] \cdot [h_n] = [g_n h_n]$.
Moreover the metric $d_{\omega}$  is bi-invariant \cite{zbMATH06563655}; see Section \ref{S:preliminaries} for more details.

The purpose of this note is to prove the following theorem.

\begin{theorem}\label{T:main}
Let $\B F$ be a non-abelian free group equipped with the bi-invariant word metric $d_{\overline{S}}$ associated with a set $S$ of free generators.
Then the asymptotic cone ${\rm Cone}_{\omega}(\B F,d_{\overline{S}})$ has non-trivial 
elements of order $2$ for any choice of non-principal ultrafilter $\omega$.
\end{theorem}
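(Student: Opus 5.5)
The plan is to write down an explicit sequence $g_n\in\B F$ and prove two estimates:
\[
\|g_n\|\ \ge\ c\,n \quad\text{for some } c>0, \qquad \|g_n^2\| = o(n),
\]
where $\|\cdot\|$ is the conjugation-invariant norm $\|g\| = d_{\overline S}(1,g)$. This norm is the least number of conjugates of elements of $S^{\pm1}$ whose product is $g$. The two estimates say exactly that $[g_n]$ is a nontrivial element of ${\rm Cone}_\omega(\B F,d_{\overline S})$ with $[g_n]^2 = [g_n^2] = 1$. Since the estimates hold for every $n$, the conclusion holds for every non-principal ultrafilter $\omega$. It is enough to treat two free generators $a,b\in S$, because the other generators play no role in either estimate.

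\textbf{Upper bound for $\|g_n^2\|$.} Two tools give this. Each commutator $[u,v]$ with $v$ short satisfies
\[
\|[u,v]\| = \|v\cdot u v^{-1}u^{-1}\| \le 2\|v\|.
\]
More generally, $\|xyx^{-1}y^{-1}\|$ is controlled whenever the relevant pieces cancel modulo conjugation. The idea is to choose $g_n = x_n y_n$ so that in $g_n^2 = x_n y_n x_n y_n$ one can conjugate blocks around to cancel almost everything, leaving a product of boundedly many (or $o(n)$ many) conjugates of generators. The element $g_n$ itself must still have exponent sum zero in each generator, since $g_n^2$ can only be short if the abelianisation of $g_n$ is small. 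My first candidates are built from long powers, for instance $g_n=[a^n,b^n]$ or variants such as $a^n b^n a^{-n} b a^{n} b^{-n}a^{-n}b^{-1}$. For these I will search, by hand, for an identity rewriting $g_n^2$ as a product of a bounded number of commutators with one short entry.

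\textbf{Lower bound for $\|g_n\|$.} Let $\phi\colon\B F\to\mathbb R$ be any quasimorphism that is bounded on conjugates of generators, with $|\phi(x)|\le C$ for every such conjugate. Then
\[
|\phi(g)| \le (C+D)\,\|g\| + D,
\]
where $D$ is the defect of $\phi$. Homogeneous quasimorphisms are conjugation invariant, so they take bounded values on conjugates of generators. However, they are bounded by $D$ on commutators, and $g_n$ is a product of commutators, so they are useless here. I will therefore use non-homogeneous Brooks-type counting quasimorphisms, for example counting occurrences of $a^k$-blocks or of subwords like $a^{m}b^{m}$. These must be adapted so that they remain uniformly bounded on all conjugates $w s^{\pm1}w^{-1}$; a symmetric counting function, in which a subword and its inverse contribute with opposite signs, should achieve this. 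The goal is to show such a quasimorphism grows linearly on $g_n$. An alternative for this step is a direct combinatorial argument: if $g_n$ is a product of $k$ conjugates of generators, then cancellation diagrams (a van Kampen-type picture over the conjugates) force $g_n$ to have a reduced-word structure with at most $O(k)$ "turns", which the chosen $g_n$ violates linearly in $n$.

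\textbf{Main obstacle.} The hard part is the tension between the two estimates, and in particular finding $g_n$ for which $g_n^2$ is provably short while $g_n$ is provably long. The squaring identity in the upper bound is where I expect to spend most effort, since it needs a genuinely clever rewriting. I would first try words in which $g_n$ is a "half-twist": $g_n=x y$ with $y$ a conjugate of $x^{-1}$ chosen so that $g_n^2$ telescopes.
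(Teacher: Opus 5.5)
You have the right reduction: it suffices to find $g_n\in\B F(\{a,b\})$ with $\|g_n\|=\Theta(n)$ and $\|g_n^2\|=o(n)$. This is exactly the paper's strategy. You also need the upper bound $\|g_n\|=O(n)$, otherwise $(g_n)$ does not define a point of the cone; the paper gets this by reparametrising $n\mapsto\lfloor\sqrt n\rfloor$.

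But the proposal stops there. The sequence itself, which is the whole content of the theorem, is deferred to a search by hand, and the candidates you name do not work.
\begin{itemize}
\item For $g_n=[a^n,b^n]$ one has $\|g_n\|=2n$ and also $\|g_n^2\|\geq 2n$. A cancellation sequence of length $<2n$ in $a^nb^na^{-n}b^{-n}a^nb^na^{-n}b^{-n}$ empties at most one of the eight syllables. If none is emptied, the remainder is cyclically reduced. If one is emptied, its two cyclic neighbours have opposite signs and may cancel. The next two are then the same letter with the same sign, because the sign pattern $a^+b^+a^-b^-$ has period $4$. So the remainder is never trivial, and Lemma~\ref{L:cancel} gives the bound.
\item Your variant $a^nb^na^{-n}ba^nb^{-n}a^{-n}b^{-1}$ equals $[a^nb^na^{-n},b]$, so its norm is at most $2$.
\item The ``half-twist'' $x\cdot zx^{-1}z^{-1}$ is just an arbitrary commutator, with nothing forcing its square to collapse.
\end{itemize}
Incidentally, vanishing exponent sums are not needed. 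Only $\sum_s|e_s(g_n)|=o(n)$ is forced, and the paper's $g_n=x_0U_2$ has $x_0$-exponent sum $1$.

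The more serious problem is that your lower-bound method cannot succeed for any sequence with the required properties. If $\phi$ is a quasimorphism with defect $D$ and $|\phi|\leq C$ on $\overline S$, then
\[
2|\phi(g_n)|\leq |\phi(g_n^2)|+D\leq (C+D)\|g_n^2\|+2D=o(n).
\]
Equivalently, $[g_n]\mapsto\lim_\omega\phi(g_n)/n$ is a Lipschitz homomorphism ${\rm Cone}_\omega(\B F,d_{\overline S})\to\mathbb R$, and such a homomorphism kills all torsion. So no such $\phi$, homogeneous or not, Brooks-type or not, can certify that $[g_n]\neq 1$.

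The same objection applies to the ``turns'' idea. Suppose $\tau$ is any function of the cyclically reduced form with $\tau(v^2)\geq\tau(v)$ and $\|g\|\geq c\,\tau(g)-C'$. The cyclic reduction of $g_n^2$ is that of $g_n$ read twice, so $\tau(g_n)\leq\tau(g_n^2)=o(n)$, and $\tau$ gives no linear lower bound. What is needed is a lower bound that genuinely separates $g$ from $g^2$.

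The paper obtains this from the iterated substitutions $g_n=\varphi_{2n}\circ\cdots\circ\varphi_3(x_0)$, which insert $T_k=[x_k,(y_kx_k)^n]$.
\begin{itemize}
\item The upper bounds $\|g_n^2\|\leq 8n-4$ and $\|g_n\|\leq 4n^2-8n+5$ come from explicit rewritings (Lemma~\ref{L:25}, Propositions~\ref{P:fiuu} and~\ref{P:fiu}).
\item The lower bounds come from an induction on cancellation systems and foldings, using the projections $\pi_I$, $\widehat\pi_I$, $\rho_\infty$ (Lemmas~\ref{L:fik-anti-triangle} and~\ref{L:intricate}). This gives quadratic growth on odd-length words in $\langle x_0,x_1\rangle$ but only linear growth on even-length ones (Propositions~\ref{P:final} and~\ref{P:fiw-linear}). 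That parity sensitivity is exactly what separates $x_0$ from $x_0^2$.
\item Lemma~\ref{L:transplant} then moves everything isometrically into $\B F(2)$.
\end{itemize}
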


\paragraph{Strategy of the proof.}
We prove Theorem \ref{T:main} by an explicit construction of a sequence of
elements $f_n\in \B F$ such that $\|f_n\|=d_{\overline{S}}(f_n,1)$ grows linearly with $n$
and the norms $\|f_n^2\|$ of their squares grow sublinearly with $n$.  The
proof has three steps. 
The first and most substantial step is the proof of the following theorem.
Throughout this paper, given a set $S$  we write $\B F(S)$ for the free group freely generated by $S$. For an integer $d \geq 0$ we write $\B F(d)$ for the free group on $d$ generators.

\begin{theorem}[See Theorem \ref{T:bounds}]\label{thm:Nie intro}
For any $n\geq 2$ there exists an element 
$g_n \in \B F(4n-3)$ such that
\begin{equation}
\begin{aligned}
n^2-n+1 \leq &\|g_n\| \leq 4n^2-8n+5\\
2n \leq &\|g_n^2\| \leq 8n-4.
\end{aligned}
\label{Eq:main}
\end{equation}
\end{theorem}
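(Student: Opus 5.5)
The plan is to give $g_n$ by an explicit formula and to prove the four inequalities in \eqref{Eq:main} separately. The two upper bounds will come from writing $g_n$ and $g_n^2$ as explicit products of conjugates of generators. The two lower bounds will come from combinatorial invariants of reduced words.

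\emph{Combinatorial model of the norm.} First I record (or prove, if the paper has not already done so) that in $\B F(S)$ one has $\|g\|\le k$ if and only if some word representing $g$ becomes freely trivial after deleting $k$ of its letters. In one direction, deleting the letter $s$ from $usv$ multiplies by a conjugate of $s^{\mp1}$. In the other, a product of $k$ conjugates $c_is_ic_i^{-1}$ is a word which becomes trivial after deleting the $k$ letters $s_i$. I would then reduce to the reduced word of $g$, by checking that free cancellation between inserted pairs never lowers the minimal number of deletions. This turns both lower bounds into statements about deleting letters from one explicit reduced word.

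\emph{Construction and upper bounds.} I would use the $4n-3$ generators to build $g_n$ as a nested product of about $n$ commutator-type blocks on fresh generators, with block sizes growing linearly in $n$. This makes the reduced length of $g_n$ quadratic. The blocks are arranged so that in $g_n^2$ the second copy telescopes against the first. The key identity to aim for is that $g_n^2$ can be written, up to conjugation, as $a\,(b\,a^{-1})\,b$, where $a,b$ each differ from a conjugate of $g_n^{\pm1}$ by only $O(1)$ letters per layer. This gives an explicit word for $g_n^2$ that becomes trivial after deleting $8n-4$ letters. For $g_n$ itself, deleting one letter per occurrence of the generators that do not cancel in pairs should give the bound $4n^2-8n+5$ directly.

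\emph{Lower bounds.} The bound $\|g_n^2\|\ge 2n$ will follow from abelianization. The map $\B F\to\mathbb Z^{S}$ is $1$-Lipschitz from the bi-invariant word norm to the $\ell^1$ norm, so it suffices to arrange that the exponent-sum vector of $g_n$ has $\ell^1$ norm $n$. The bound $\|g_n\|\ge n^2-n+1$ is the main obstacle. I would argue by induction on $n$, deleting the generators of the outermost layer. The retraction $\B F(4n-3)\to\B F(4n-7)$ that kills four generators does not increase the norm and sends $g_n$ to (a conjugate of) $g_{n-1}$. What remains is to show that any deletion pattern trivializing $g_n$ must delete at least about $2n-2$ letters belonging to the outer layer, giving the recursion $\|g_n\|\ge\|g_{n-1}\|+2(n-1)$. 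To prove that count, I would use a pairing argument. Deleted-letter patterns that trivialize a word correspond to non-crossing matchings of the surviving letters with inverse pairs. The outer-layer letters in $g_n$ are interleaved with the inner blocks in a way that forbids any non-crossing matching among more than $O(1)$ of them per block.
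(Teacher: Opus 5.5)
Your proposal does not yet prove the statement. The statement is existential, so its content is a specific $g_n$ together with a verification of the four inequalities, and you supply neither. The element $g_n$ is described only as ``a nested product of about $n$ commutator-type blocks''. The upper bounds rest on an identity ``to aim for'' and on deletions that ``should give'' $4n^2-8n+5$. As stated, the factorization $g_n^2\sim a(ba^{-1})b$, with $a,b$ close to conjugates of $g_n^{\pm1}$, only bounds $\|g_n^2\|$ by something of the order of $\|a\|+\|b\|$, which is quadratic. What is needed is a factorization of $g_n^2$ into pieces of \emph{linear} norm. In the paper $g_n=\varphi_{2n}\circ\cdots\circ\varphi_3(x_0)$, where $\varphi_k$ multiplies each generator on the right by $T_k=(x_ky_k)^n(x_k^{-1}y_k^{-1})^n$, or on the left by $T_k^{-1}$ when the generator's index and $k$ are both odd. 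One writes $g_n^2=\varphi^{(2)}(x_0x_1)\cdot\varphi^{(2)}(x_1^{-1}x_0)$. The identity $gT_khT_k=gy_k^{-1}\cdot(hx_k)^{\bullet}\cdot(y_kx_k^{-1})^{\bullet}$ of Lemma \ref{L:25} then shows that the norms of the images of $x_0x_1^{\pm1}$ grow by only $4$ per pair of layers (Proposition \ref{P:fiuu}). Your abelianization route to $\|g_n^2\|\ge 2n$ is a valid tool, but it imposes a constraint you do not address. Commutator layers contribute nothing to the exponent-sum vector, so your retraction would give $g_n$ and $g_{n-1}$ the same abelianization. Each layer would therefore have to contribute exactly one unit while still letting $g_n^2$ collapse. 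In the paper the abelianization of $g_n$ is just $x_0$. The bound $2n$ comes instead from the injectivity of $\psi_k$, whose image lies in a commutator subgroup (Lemma \ref{L:psik}, Proposition \ref{P:fiw-linear}).

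The decisive gap is the lower bound $\|g_n\|\ge n^2-n+1$. You reduce it to the claim that every trivializing deletion pattern deletes about $2n-2$ letters of the layer killed by your retraction, supported only by an unspecified interleaving property. This is exactly the hard part, and it is not how the increment is obtained in the paper. There, passing from $\varphi^{(2n-2k)}$ to $\varphi^{(2n-2k-2)}$ (Lemma \ref{L:intricate}, Proposition \ref{P:final}), a minimal cancellation system does one of two things.
\begin{enumerate}
\item It deletes at least $4n$ letters of the new layer, which exceeds the required increment $2k+2$ since $k<n$. This is why the fixed exponent $n$ sits inside every $T_k$: it means $g_n$ does \emph{not} retract to $g_{n-1}$, and the induction runs over layers with $n$ fixed.
\item Otherwise the last inserted copy of $T_j$ survives partially, and its rightmost surviving letter is folded with an earlier letter. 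Lemma \ref{L:fik-anti-triangle} (via $\rho_\infty$) then splits $w=w_Lw_0w_R$ with $w_0$ and $w_Lw_R$ non-trivial and of opposite parity.
\end{enumerate}
In the second branch the increment $2k+2$ is paid by the even-length piece through the remaining layers (Proposition \ref{P:fiw-linear}), not by counting deleted letters of the new layer. Consequently the induction hypothesis must cover all odd-length elements, respectively all non-trivial even-length elements, of $\langle x_0,x_1\rangle$ (Proposition \ref{P:final}), because the splitting produces new words. An induction that only follows the single element $g_n\mapsto g_{n-1}$ cannot absorb this step.
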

The construction of $g_n$'s and the proof of the above inequalities (Theorem
\ref{T:bounds}) is due to
the third  author \cite[Theorem 5.1]{1412.0101}.
This step is highly non-trivial.
Since the paper~\cite{1412.0101} has not been published, we present the 
details in Section \ref{S:Finfty}.
In the second step we define homomorphisms  
$\Psi_n\colon \B F({4n-3})\to \B F(2)$
such that
$$
\|\Psi_n(g_n)\| = \|g_n\| \qquad \text{and} \qquad \|\Psi_n(g_n^2)\| = \|g_n^2\|.
$$
Finally, we modify the sequence $(\Psi_n(g_n))$ to obtain a non-trivial element
of order $2$ in the asymptotic cone 
${\rm Cone}_{\omega}(\B F(2))$.
Since 
$\B F(2)$ 
embeds isometrically into the free group of any rank, this
completes the proof.

\paragraph{Comparison with the commutator length.}
An analogous theorem is known for the commutator length in free groups.
It is a result of Kharlampovich and Myasnikov \cite[Theorem 3 and 4]{zbMATH01744024}, 
from which follows the existence of non-trivial $2$-torsion in the asymptotic cone
${\rm Cone}_{\omega}([\B F,\B F],{\rm cl})$, where ${\rm cl}(g)$ denotes
the commutator length of $g\in [\B F,\B F]$. 
Notice that their theorem is not constructive.
An attempt to construct such elements explicitly was done by Bartholdi et al.
\cite[Theorem D]{zbMATH07861316}, where they constructed  
$g\in [\B F(2),\B F(2)]$ of word length $64$ such that
$$
{\rm cl}(g)=3\qquad\text{and}\qquad {\rm cl}(g^2)=2.
$$

It follows from inequalities \eqref{Eq:main} that an element $g=g_9\in \B F(33)$
satisfies 
$$
\|g^2\| \leq 68 < 73\leq \|g\|.
$$
However, it follows from the construction that this element is represented by a
word of length $37^{16}$. 
There are examples of shorter elements $g$ in the free group such that $\| g^2 \| < \|g \|$.
The third author used a computer-aided search \cite{cancellation-norm-search} to find the following element $g \in \B F(\{a,b\}) \cong \B F(2)$ of length $33$:
\[
g = aba^{-1}b^{-1}a^{-1}b^{-2}ab^{2}ab^{-1}a^{-1}b^{-1}aba^{-1}b^{-2}a^{-1}b^{2}ab^{-1}a^{-1}b^{2}abab^{-1}a^{-1}b^{-1}.
\]
It satisfies $\| g\|=5$ but $\| g^2\|=4$; see Example \ref{E:gg2} for more details.

It remains unknown whether commutator length can decrease under taking cubes, that is, whether ${\rm cl}(g^3)<{\rm cl}(g)$ can occur \cite[Section 3.3]{zbMATH07861316}. In contrast, we make the following conjecture for the conjugation-invariant word norm.
\begin{conjecture}
For every non-trivial $g\in \B F$,
\[
\|g^3\|>\|g\|.
\]
\end{conjecture}

\begin{remark}
The sequence $(g_n)$ constructed in Section \ref{S:Finfty} can be easily modified
to be contained in the commutator subgroup.
They may be potential explicit examples for the result of Kharlampovich-Myasnikov. 
However, we don't know how to estimate their commutator length.
\end{remark}

\paragraph{Acknowledgements.}
For the purpose of open access, the author has applied a Creative Commons
Attribution (CC BY) licence to any Author Accepted Manuscript version
arising from this submission.

\section{Preliminaries}\label{S:preliminaries}

\paragraph{Norms and metrics on groups.}\label{PA:metrics}
Let $G$ be a group. A function $\nu\colon G\to \B R$ is called a {\em norm} if
it satisfies the following axioms for all $g,h\in G$.
\begin{enumerate}
\item $\nu(g)\geq 0$ and $\nu(g)=0$ if and only if $g=1$;
\item $\nu(g^{-1}) = \nu(g)$;
\item $\nu(gh) \leq \nu(g)+\nu(h)$.
\end{enumerate}
If, in addition, 
\begin{enumerate}
\item[4.]
$\nu(g^{-1}hg) = \nu(h)$
\end{enumerate}
then $\nu$ is called {\em conjugation-invariant}.
The associated metric $d_{\nu}$ is defined by
$$
d_{\nu}(g,h) = \nu(g^{-1}h).
$$
It is left-invariant in the sense that $d_{\nu}(fg,fh) = d_{\nu}(g,h)$. That is, the
left action of $G$ on itself is by isometries. A norm $\nu$ is conjugation-invariant
if and only if $d_{\nu}$ is bi-invariant:
$$
d_{\nu}(fg,fh) = d_{\nu}(gf,hf) = d_{\nu}(g,h),
$$
for all $f,g,h\in G$.

\paragraph{Word metrics.}\label{PA:word}
If $S\subseteq G$ is a symmetric generating set then the associated
{\em word norm} is defined by
$$
|g|_S = \min\{n\in \mathbb N\ |\ g=s_1\dots s_n,\ s_i\in S\}.
$$
Being symmetric means $S^{-1}=S$.
The associated metric is called the {\em word metric} and is denoted by $d_S$.
If $S$ is invariant under conjugations, that is, $S=g^{-1}Sg$ for every $g\in G$,
then the associated word norm is conjugation-invariant and the associated
word metric is bi-invariant.

Let $\B F= \B F(S)$ be a free group on a set $S$ containing at least two elements. 
Let $\overline{S} = \bigcup_{g\in \B F}g^{-1}S^{\pm 1}g$ be the union of the
conjugacy classes of all generators $s\in S$ and their inverses.
This paper is concerned with the conjugation-invariant word norm and
the bi-invariant word metric on $\B F$ associated with $\overline{S}$. 
Throughout the paper we will use the following {\bf notation}:
\begin{itemize}
\item $\ell(w)$ - the word length of a word $w$ in the alphabet $S\cup S^{-1}$;
\item $|g|=|g|_S$ - the word norm associated with $S$; 
this is the length of the unique reduced word in the alphabet $S^{\pm 1}$ representing $g$. Notice that $|w| \leq \ell(w)$ for any (not-necessarily reduced) word.
\item $\|g\|= |g|_{\overline{S}}$ - the conjugation-invariant word norm associated with $\overline{S}$;
\item $d(g,h) = \|g^{-1}h\|$ - the associated bi-invariant metric.
\end{itemize}
Let $v,w$ be words in the alphabet $S\cup S^{-1}$. The word $v=t_1t_2\dots t_m$ is called
a {\em subsequence} of $w=s_1s_2 \dots s_n$ if $t_j = s_{i_j}$ for some 
subsequence $(i_j)_{j=1}^m$ of the sequence $(1,2,\ldots,n)$. A subsequence
consisting of letters with consecutive indices is called a {\em subword}.

\paragraph{Cancellation length and folding.}\label{PA:cancel}
Let $w$ be a word in the alphabet $S\cup S^{-1}$. A {\em cancellation sequence}
for $w$ is a subsequence $c$ of $w$ such that deleting $c$ from $w$ yields a word
representing the trivial element in the group $\B F(S)$.  For example,
$bab^{-1}a^{-1}$ is a cancellation sequence in
$$
[a,b]^3 = 
a{\textcolor{blue}{\underline{b}}}
a^{-1}b^{-1}\textcolor{blue}{\underline{a}}
ba^{-1}{\textcolor{blue}{\underline{b^{-1}}}}
ab\textcolor{blue}{\underline{a^{-1}}}
b^{-1}.
$$
The {\em cancellation length} of $w$ is the smallest word length $\ell(c)$ of a cancellation sequence of $w$.

\begin{example}\label{E:gg2}
The following are cancellation sequences of shortest length of the elements $g$ and $g^2$ mentioned in the
introduction.
\[
g = aba^{-1}b^{-1}a^{-1}b^{-2}ab\textcolor{blue}{\underline{ba}}b^{-1}a^{-1}\textcolor{blue}{\underline{b^{-1}}}aba^{-1}b^{-2}\textcolor{blue}{\underline{a^{-1}}}b^{2}ab^{-1}a^{-1}b^{2}abab^{-1}a^{-1}\textcolor{blue}{\underline{b^{-1}}},
\]
\begin{align*}
g^2 = &aba^{-1}b^{-1}a^{-1}b^{-2}abbab^{-1}a^{-1}\textcolor{blue}{\underline{b^{-1}}}aba^{-1}b^{-2}a^{-1}b^{2}ab^{-1}a^{-1}b^{2}abab^{-1}a^{-1}\textcolor{blue}{\underline {b^{-1}}}\\
&aba^{-1}b^{-1}a^{-1}b^{-2}abbab^{-1}a^{-1}b^{-1}aba^{-1}b^{-2}\textcolor{blue}{\underline{a^{-1}}}b^{2}ab^{-1}a^{-1}b^{2}ab\textcolor{blue}{\underline{a}}b^{-1}a^{-1}b^{-1}.
\end{align*}
\end{example}

The following lemma is straightforward and it was observed in 
\cite[Proposition 2.5]{zbMATH06532523}.
\begin{lemma}\label{L:cancel}
Let $g\in \B F(S)$ be represented by a word $w$, not necessarily reduced. 
Then $\|g\|$ is equal to the cancellation length of $w$. In particular, cancellation lengths of any two words representing~$g$ are equal.
\qed
\end{lemma}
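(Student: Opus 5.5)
We prove the two inequalities $\|g\| \le \mathrm{canc}(w)$ and $\mathrm{canc}(w)\le \|g\|$ separately, where $\mathrm{canc}(w)$ is the cancellation length of $w$. The final assertion then follows at once, since both cancellation lengths equal $\|g\|$.

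\textbf{Cancellation length bounds the norm from above.} Let $c=t_1\dots t_m$ be a cancellation sequence of $w$, occurring at positions $i_1<\dots<i_m$. Write
\[
w = u_0 t_1 u_1 t_2 \cdots t_m u_m,
\]
where $u_0u_1\cdots u_m$ represents $1$. Set $p_j = u_0\cdots u_{j-1}$. Then
\[
w = \prod_{j=1}^m p_j t_j p_j^{-1} \cdot (u_0\cdots u_m),
\]
which is verified by telescoping. Each factor $p_jt_jp_j^{-1}$ lies in $\overline{S}$ and the last factor is trivial, so $\|g\|\le m$. Minimising over $c$ gives $\|g\|\le \mathrm{canc}(w)$.

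\textbf{The norm bounds cancellation length from above.} Suppose $g=\prod_{j=1}^m x_j s_j x_j^{-1}$ with $s_j\in S^{\pm1}$ and $m=\|g\|$. Writing each $x_j$ as a word gives a word $W=\prod_j x_j s_j x_j^{-1}$ with cancellation length at most $m$: deleting the letters $s_j$ leaves $\prod_j x_jx_j^{-1}$, which represents $1$. It therefore suffices to show that cancellation length is invariant under the elementary moves relating two words for the same element, namely inserting or deleting a subword $ss^{-1}$. Combined with the first part, invariance under these moves yields $\mathrm{canc}(w)=\mathrm{canc}(W)\le m = \|g\|$.

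\textbf{Invariance under the elementary moves (the main step).} Let $w'=a\,ss^{-1}b$ and $w=ab$.
\begin{itemize}
\item From $w$ to $w'$: a cancellation sequence of $w=ab$ is also one for $w'$, since the inserted pair $ss^{-1}$ cancels freely after deletion. Hence $\mathrm{canc}(w')\le \mathrm{canc}(w)$.
\item From $w'$ to $w$: take a cancellation sequence $c'$ of $w'$.
\begin{itemize}
\item If $c'$ uses neither letter of the pair, then removing the pair $ss^{-1}$ from the remainder preserves triviality, so $c'$ works for $w$.
\item If $c'$ uses both letters, drop them; the remainder is unchanged, and the length decreases.
\item If $c'$ uses exactly one letter, say $s$, then the remainder is $a' s^{-1} b'$, which represents $1$. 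Replace that used letter by nothing and instead delete the letter $s^{-1}$'s ``partner''. Concretely, $a's^{-1}b'=1$ implies that $a'b'$ is conjugate to $s$. This needs care, so instead do the following.
\end{itemize}
\end{itemize}

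The cleaner route is to argue via free reduction: each cancellation sequence of $w'$ yields a product of $\ell(c')$ conjugates of letters equal to $g$, by the first paragraph. Hence $\mathrm{canc}(w')\ge\|g\|$ in general, and it remains to show that the specific word achieves $\|g\|$.

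I therefore expect the main obstacle to be this reverse direction: from an arbitrary expression of $g$ as a product of $m$ conjugates of generators, produce a cancellation sequence of length $m$ in a \emph{given} word $w$. I would handle this by induction on $m$, using a folding/van Kampen-style argument. The relation $w\cdot\prod_j x_j s_j^{-1}x_j^{-1}=1$ gives a planar diagram, a disc with $m$ boundary ``spikes'' attached to $w$. Each spike letter $s_j^{-1}$ must cancel against some letter of $w$ in the planar pairing of the free reduction. The non-crossing (planar) matching of letters in the trivial word $w\prod_j x_js_j^{-1}x_j^{-1}$ pairs each $s_j^{-1}$ with a letter of $w$, because letters of $x_j$ pair among themselves or with $w$. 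Deleting the letters of $w$ matched to the $s_j$'s leaves a word whose remaining letters are matched among themselves non-crossingly, hence represent $1$.

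The delicate point is that letters of $x_j$ may match letters of $w$ rather than each other. I would resolve this by choosing the expression with $\sum_j \ell(x_j)$ minimal and showing, by a planarity exchange argument, that the matching can then be rearranged so that the letters in $w$ opposite the $s_j$ form a cancellation sequence of length at most $m$.
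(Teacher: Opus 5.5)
\textbf{The key inequality $\mathrm{canc}(w)\le\|g\|$ is not proved.} Your first direction, $\|g\|\le\mathrm{canc}(w)$, is correct; the telescoping identity checks out. Your reduction of the converse is also correct: it suffices to show that cancellation length is unchanged by inserting or deleting a pair $ss^{-1}$ with $s\in S^{\pm1}$, applied to $W=\prod_j x_js_jx_j^{-1}$, which visibly has cancellation length at most $m$.

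However, you never complete that converse. In the deletion move you abandon the one nontrivial case, where the cancellation sequence contains exactly one letter of the pair. Your ``cleaner route'' only restates the first direction. The van Kampen-style replacement is a sketch resting on a claim that is false as stated: a spike letter $s_j^{-1}$ need not be paired with a letter of $w$. For instance, take $w=s$ and the expression $s=xsx^{-1}$ with $x=s$. The word $s\cdot s\,s^{-1}s^{-1}$ has a unique folding, and it pairs the spike letter with the letter of $x$. You propose to repair this by minimising $\sum_j\ell(x_j)$ together with a ``planarity exchange argument'', but that argument is not given. So the inequality $\mathrm{canc}(w)\le\|g\|$ for an arbitrary word $w$, which is the real content of the lemma, remains unproven.

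\textbf{How to close the gap.} The idea you dropped, deleting the partner of $s^{-1}$, is exactly what is needed, and foldings make it immediate. Suppose $c'$ is a cancellation sequence of $w'=a\,ss^{-1}b$ that contains the inserted $s$ but not the inserted $s^{-1}$. Let $a',b'$ be what remains of $a,b$ after deleting $c'$.
\begin{enumerate}
\item The word $a's^{-1}b'$ represents $1$, so it admits a folding. In it, $s^{-1}$ is paired with some letter $t=s$ lying in $a'$ or $b'$.
\item Let $c''$ be $c'$ with the inserted $s$ replaced by $t$. This is a subsequence of $ab$ with $\ell(c'')=\ell(c')$.
\item Deleting $c''$ from $ab$ leaves $a's^{-1}b'$ with the folded pair $\{t,s^{-1}\}$ removed. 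The remaining pairs still form a folding, since the nested-or-disjoint condition is inherited. Hence this word is trivial and $c''$ is a cancellation sequence of $ab$.
\end{enumerate}
The case where $c'$ contains only the inserted $s^{-1}$ is symmetric. Together with your other two cases and the insertion direction, this shows cancellation length is invariant under elementary moves. Therefore $\mathrm{canc}(w)=\mathrm{canc}(W)\le\|g\|$, which completes the proof without any van Kampen diagrams.
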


\begin{remark}
Since the length of the shortest cancellation sequence does not depend on the choice of a word $w$ representing $g$, the cancellation length provides a powerful tool for computations
involving conjugation-invariant norm on free groups. This was demonstrated,
for example, in the proof of the rationality of the stabilisation of
the conjugation-invariant norm on free groups \cite{zbMATH07843754}.
\end{remark}

Let $w=s_1\dots s_n$ be a word and let $c$ be a cancellation sequence.
Let $u=s_{i_1}\dots s_{i_{k}}$ be the word obtained from $w$ by deleting
the letters in $c$. It represents a trivial element and
its trivialisation is by successive removal of pairs of letters $ss^{-1}$
or $s^{-1}s$, called {\em contractions}. Keeping track
of the contractions defines a partition $\C F$ of 
$\{i_1,i_2,\ldots,i_{k}\}\subseteq \{1,2,\ldots,n\}$ into
two-element subsets with the property that if $\{i,j\},\{k,\ell\}\in \C F$
then the intervals $[i,j],[k,\ell]\subseteq \mathbb R$ are either disjoint or one is contained in the other. Notice that if $\{i,j\}\in \C F$ then $s_i = s_j^{-1}$.
For example, if $c=a^2$ is a cancellation sequence in 
$w = a^2b\textcolor{blue}{\underline{a}}b^{-2}\textcolor{blue}{\underline{a}}ba^{-2}$, 
then 
$u = a^2bb^{-2}ba^{-2}$ and 
$$
\{3,5\}\sqcup \{6,8\}\sqcup \{2,9\}\sqcup \{1,10\}=\{1,2,3,5,6,8,9,10\}
$$
is the partition corresponding to contractions $bb^{-1}$, $b^{-1}b$, $aa^{-1}$
and $aa^{-1}$. Such a partition $\C F$ is called a {\em folding} of $u$.

\begin{definition}
A {\em cancellation system} of a word $w$ is a pair $(c,\C F)$, where
$c$ is a cancellation sequence of $w$ and $\C F$ is a folding for
the word $u$ obtained by removing the letters of $c$ from $w$.
\end{definition}

\begin{lemma}\label{L:folding}
Let $w=s_1\dots s_n$ be a word and let $(c,\C F)$ be a cancellation system.
Suppose $\{i,j\}\in \C F$, where $i<j$, and let
\begin{align*}
w_L&=s_1\dots s_{i-1}\\
w_0&=s_i\dots s_{j}\\
w_R&=s_{j+1}\dots s_{n}.
\end{align*}
Let $c_L,c_0,c_R$ be subwords of $c$ consisting of letters in
$w_L,w_0$ and $w_R$, respectively.
Then $c_0$ is a cancellation sequence of $w_0$ and $c_Lc_R$ is a cancellation
sequence of $w_Lw_R$. In particular, if $c$ is minimal then
$$
\|w\| \geq \|w_0\| + \|w_Lw_R\|.
$$
\end{lemma}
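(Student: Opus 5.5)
The argument has two parts. First I would check that deleting the letters of $c_0$ from $w_0$ leaves a word that folds completely. Then I would check the same for $c_Lc_R$ in $w_Lw_R$. The inequality then follows from Lemma \ref{L:cancel}.

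Let $u$ be the word obtained from $w$ by deleting $c$, and let $I\subseteq\{1,\dots,n\}$ be the set of indices of $u$, so that $\C F$ is a partition of $I$ into pairs. I would use the non-crossing property of the folding. Take any pair $\{k,\ell\}\in\C F$ other than $\{i,j\}$. The interval $[k,\ell]$ is either disjoint from $[i,j]$ or nested with it. It cannot properly contain $[i,j]$ on one side only, and it cannot contain an endpoint of $[i,j]$, since $\C F$ is a partition. Hence every pair of $\C F$ is of one of two kinds:
\begin{itemize}
\item[(a)] contained in $\{i,\dots,j\}$;
\item[(b)] contained in $\{1,\dots,i-1\}\cup\{j+1,\dots,n\}$.
\end{itemize}
Pairs of type (b) may have one element on each side of $[i,j]$, when $[k,\ell]\supset[i,j]$.

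For $w_0$: the letters of $w_0$ not in $c_0$ are exactly those with indices in $I\cap[i,j]$. These indices are partitioned by the pairs of type (a), and $\{i,j\}$ itself is among them. A non-crossing partition of a word into pairs $\{p,q\}$ with $s_p=s_q^{-1}$ is a folding. This can be seen by induction: an innermost pair consists of adjacent letters of the current word, and contracting it leaves a non-crossing pairing of the rest. So the word $w_0$ with $c_0$ deleted represents the trivial element, and $c_0$ is a cancellation sequence of $w_0$.

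For $w_Lw_R$: the letters not in $c_Lc_R$ have indices $I\setminus[i,j]$, relabelled in order. The pairs of type (b) partition this set. Non-crossing is preserved, because removing the block $[i,j]$ only shifts indices monotonically. By the same induction, $c_Lc_R$ is a cancellation sequence of $w_Lw_R$.

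For the inequality, take $c$ minimal, so $\ell(c)=\|w\|$ by Lemma \ref{L:cancel}. Since $c$ is the disjoint union of $c_L$, $c_0$ and $c_R$,
$$\|w\|=\ell(c_0)+\ell(c_Lc_R).$$
Applying Lemma \ref{L:cancel} again to $w_0$ and to $w_Lw_R$ gives $\ell(c_0)\ge\|w_0\|$ and $\ell(c_Lc_R)\ge\|w_Lw_R\|$. Adding these yields
$$\|w\|\ge\|w_0\|+\|w_Lw_R\|.$$

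The only delicate point is the claim that a non-crossing pairing by inverse letters gives a complete contraction. The innermost-pair induction handles it. It needs only the observation that any letters lying between the two members of an innermost pair were deleted as part of $c$, so in the current word the two members are adjacent.
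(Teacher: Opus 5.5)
Your proof is correct and follows the same route as the paper: the non-crossing property of $\C F$ splits the pairs into those inside $\{i,\dots,j\}$ and those outside, which gives cancellation systems for $w_0$ and for $w_Lw_R$, and the inequality then follows from $\ell(c)=\ell(c_0)+\ell(c_Lc_R)$ together with Lemma \ref{L:cancel}. The only difference is that you spell out, via the innermost-pair induction, why a non-crossing pairing of mutually inverse letters contracts to the trivial word, whereas the paper leaves this step implicit.
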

\begin{proof}
If $\{k,l\} \in \C F$ is such that $i<k<j$ then also $i<l<j$ by definition
of a folding. This implies that $c_0$ is a cancellation sequence of $w_0$.
Consequently, removing $c_0$ from $c$ yields a cancellation system
$(c_Lc_R,\C F')$ of $w_Lw_R$. In particular, $c_Lc_R$ is a cancellation
sequence for $w_Lw_R$. If $c$ is minimal then
$$
\|w\| = \ell(c) = \ell(c_0)+\ell(c_Lc_R) \geq \|w_0\|+\|w_Lw_R\|,
$$
which finishes the proof.
\end{proof}

\paragraph{Asymptotic cones.}\label{PA:acones}
An ultrafilter $\omega$ on the set $\mathbb N$ of non-negative integers 
is a finitely additive probability measure on the set of all subsets of $\mathbb N$ 
with values in the set $\{0,1\}$. 
It is called {\em non-principal} if every finite
subset of $\mathbb N$ has measure zero; see \cite[Section 7]{zbMATH06866242}
for more details. In what follows we implicitly assume that
all ultrafilters we consider are non-principal.
For every bounded sequence $(a_n)$ of
real numbers the ultrafilter $\omega$ defines its ultralimit 
$$
\lim_{\omega}a_n = a
$$
by the property that for every $\epsilon > 0$ 
$$
\omega\{n\in \mathbb N\ |\ |a_n-a|<\epsilon\} = 1.
$$

Let $(X,d)$ be a pointed metric space and let $*\in X$ be the basepoint.
Consider the set of sequences $(x_n)$ in $X$ such that 
\begin{equation}
\lim_{\omega} \frac{d(*,x_n)}{n} < \infty.
\label{Eq:linear}
\end{equation}
Two such sequences $(x_n)$ and $(y_n)$ are called equivalent if 
$$
\lim_{\omega} \frac{d(x_n,y_n)}{n}=0.
$$
The {\em asymptotic cone} ${\rm Cone}_{\omega}(X,d)$ of $(X,d)$ 
associated with an ultrafilter $\omega$ and the scaling sequence 
$s_n=n$
is the set of equivalence classes of such sequences together
with the metric defined by
\begin{equation}
d_{\omega}([x_n],[y_n]) = \lim_{\omega}\frac{d(x_n,y_n)}{n}.
\label{Eq:cone-metric}
\end{equation}
It is well known that $({\rm Cone}_{\omega}(X,d),d_{\omega})$ is a complete metric
space \cite[Proposition 7.44]{zbMATH06866242}. 
Asymptotic cones are in general defined for
sequences of metric spaces with other scaling sequences $(s_n)$, i.e. sequences such that $s_n \to \infty$, which 
changes the denominators in \eqref{Eq:linear} and \eqref{Eq:cone-metric} to $s_n$ in place of $n$;
see \cite[Section 7]{zbMATH06866242} for more details. The following straightforward
lemma was first observed by Calegari and Zhuang \cite{zbMATH06563655}.

\begin{lemma}\label{L:cone-group}
If $(G,d)$ is a group equipped with a bi-invariant metric then for any
non-principal ultrafilter $\omega$ the asymptotic cone ${\rm Cone}_{\omega}(G,d)$
is a metric group with the metric~$d_{\omega}$ being bi-invariant.\qed
\end{lemma}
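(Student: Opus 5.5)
The plan is to verify directly that the group operations on ${\rm Cone}_{\omega}(G,d)$ are well defined and that $d_\omega$ is bi-invariant. Throughout, take the basepoint to be $*=1$ and write $\|g\|=d(1,g)$ for the conjugation-invariant norm associated with $d$, as in the preliminaries.

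The first step is to check that the set of admissible sequences is closed under the proposed operations. If $\lim_\omega \|g_n\|/n<\infty$ and $\lim_\omega \|h_n\|/n<\infty$, then $\|g_nh_n\|\le \|g_n\|+\|h_n\|$ and $\|g_n^{-1}\|=\|g_n\|$. Hence $(g_nh_n)$ and $(g_n^{-1})$ again satisfy \eqref{Eq:linear}.

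The second step, which I expect to be the only place where bi-invariance is really needed, is to show that multiplication and inversion respect the equivalence relation. Suppose $(g_n)\sim(g_n')$ and $(h_n)\sim(h_n')$. By the triangle inequality together with left and right invariance,
\[
d(g_nh_n,g_n'h_n')\le d(g_nh_n,g_n'h_n)+d(g_n'h_n,g_n'h_n')=d(g_n,g_n')+d(h_n,h_n').
\]
Dividing by $n$ and taking $\omega$-limits, the right-hand side tends to $0$, so $[g_nh_n]=[g_n'h_n']$. For inverses, bi-invariance gives
\[
d(g_n^{-1},g_n'^{-1})=d(g_n',g_n),
\]
by multiplying on the left by $g_n'$ and on the right by $g_n$. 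Hence inversion is well defined as well.

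The third step is routine. The group axioms (associativity, identity $[1]$, inverses $[g_n^{-1}]$) hold termwise and therefore pass to classes. Bi-invariance of $d_\omega$ also follows termwise: $d(f_ng_n,f_nh_n)=d(g_n,h_n)=d(g_nf_n,h_nf_n)$, and taking $\omega$-limits of these quotients by $n$ preserves the equalities. Finally, the displayed inequality above shows that multiplication is $1$-Lipschitz in each variable and inversion is an isometry. So ${\rm Cone}_\omega(G,d)$ is a metric (indeed topological) group, and together with completeness \cite[Proposition 7.44]{zbMATH06866242} this gives the lemma.
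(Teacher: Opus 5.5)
Your proof is correct, and it is the routine verification the paper intends. The paper gives no proof: it calls the lemma straightforward and attributes it to Calegari and Zhuang. Your steps are exactly that omitted argument: closure of admissible sequences under products and inverses, well-definedness via $d(g_nh_n,g_n'h_n')\le d(g_n,g_n')+d(h_n,h_n')$ and $d(g_n^{-1},(g_n')^{-1})=d(g_n',g_n)$, and termwise bi-invariance passing to $\omega$-limits.
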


\paragraph{Notational conventions.}\label{PA:notation}
In complicated formulas we will sometimes use a dot $\cdot$ to denote the multiplication.
For example, $g\cdot h$ instead of $gh$. Similarly, in the notation
for a composition of functions we will sometimes skip the circle $\circ$ to make
formulas less cumbersome. For example, 
$$
\pi_{\{j,j+1\}}\varphi^{(j+1)}\varphi_{j+1}\varphi_j(h)\qquad
\text{instead of}\qquad
\left(\pi_{\{j,j+1\}}\circ \varphi^{(j+1)}\circ \varphi_{j+1}\circ\varphi_j\right)(h).
$$
The following are our conventions
for the conjugation and the commutator.
\begin{itemize}
\item Conjugation:
\[
h^g = g^{-1}hg.
\]
If the conjugating element is not relevant, 
we will write $h^{\bullet}$ instead of $h^g$. 
Thus, $f=h^{\bullet}$ means
that $f$ is equal to {\em some} conjugate of $h$.

\item Commutator: 
\[
[g,h]= ghg^{-1}h^{-1} = g\cdot (g^{-1})^{h^{-1}} = h^{g^{-1}}\cdot h^{-1}
=g\cdot(g^{-1})^{\bullet}=h^{\bullet}\cdot h^{-1}.
\]
\end{itemize}

\section{Proof of Theorem \ref{thm:Nie intro}}\label{S:Finfty}

\subsection{The construction of the elements  $g_n \in \B F_{\infty}$}\label{SS:construction}
Let $\mathbb N$ denote the set of non-negative integers.
Let 
$$
S = \{x_0,y_0,x_1,y_1,\ldots\} = \{x_i,y_i\ |\ i\in \mathbb N\}
$$
and let $\B F_{\infty} = \B F(S)$ be the free group generated by $S$.
Throughout the paper, $s_i\in S$ denotes either $x_i$ or $y_i$.

Throughout this section we fix once and for all an integer
\[
n\geq 2.
\]
For any integer $k \geq 1$ set
\begin{equation}
T_k = (x_ky_k)^n\left( x_k^{-1}y_k^{-1} \right)^n 
= [x_k,(y_kx_k)^n] = [y_k^{-1},(y_kx_k)^n],
\label{Eq:2-commutators}
\end{equation}
an element of $\B F(\{x_k,y_k\})$.
Define a homomorphism 
\[
\varphi_k\colon \B F_{\infty}\to \B F_{\infty}
\]
by specifying its values on generators as follows
\begin{equation}
\varphi_k(s_i) =
\begin{cases}
s_i\cdot T_k & \text{ if either $i$ or $k$ is even;}\\
T_k^{-1}\cdot s_i & \text{ if both $i$ and $k$ are odd.}
\end{cases}
\label{Eq:function-fi}
\end{equation}
Observe that $\ell(T_k)=4n$, so for any $g \in \B F_\infty$ 
\begin{equation}\label{Eq:phi_k ell}
| \varphi_k(g)| \leq |g|+|g| \cdot 4n = (4n+1)|g|.
\end{equation}
Also, if $g \in \B F(A)$ for some $A \subseteq \{x_0,y_0, x_1,y_1, \dots\}$ then $\varphi_k(g) \in \B F(A \cup \{x_k,y_k\})$.
For any $k=1,2,\ldots,2n-1$ set
\[
\varphi^{(k)} = \varphi_{2n}\circ\varphi_{2n-1}\circ \dots \circ \varphi_{k+1},
\]
and set
\[
\varphi^{(2n)}={\rm Id}.
\]
Finally, set
\begin{equation}
g_n = \varphi^{(2)}(x_0)=
\left(\varphi_{2n}\circ \varphi_{2n-1}\circ \dots \circ \varphi_3\right)(x_0).
\label{Eq:gn}
\end{equation}
Observe that $g_n \in \B F(\{x_0, x_3,y_3,\dots,x_{2n},y_{2n}\}) \cong \B F(4n-3)$ and, by a repeated application of \eqref{Eq:phi_k ell}
\begin{equation}\label{Eq:ell g_n}
|g_n | \leq (4n+1)^{2n-2}.
\end{equation}
The main result of this section is the following estimates,
whose proof occupies the rest of this section.

\begin{theorem}\label{T:bounds}
Let $n\geq 2$ and let $g_n\in\langle x_0,y_0, \dots, x_{2n},y_{2n} \rangle \leq \B F_\infty$ be as above. Then
\begin{align}
n^2-n+1 \leq \|g_n\|   &\leq 4n^2-8n+5 \label{Eq:norm-gn}\\
2n\leq \|g_n^2\| &\leq 8n-4 \label{Eq:norm-gn2}.
\end{align}
\end{theorem}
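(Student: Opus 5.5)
Four bounds are needed. The two upper bounds will come from writing down explicit short cancellation sequences and invoking Lemma \ref{L:cancel}. The two lower bounds will come from an induction along the composition $\varphi^{(2)}=\varphi_{2n}\circ\dots\circ\varphi_3$, built on Lemma \ref{L:folding} and on projections that kill generator pairs.

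The basic mechanism is how $\varphi_k$ interacts with the norm. Each $T_k$ is a commutator $[x_k,(y_kx_k)^n]=x_k\cdot(x_k^{-1})^{\bullet}$, so deleting one letter ($x_k$, or $y_k^{-1}$) from a copy of $T_k$ trivialises it. Hence deleting a single letter from each of the $4n$ copies of $T_k$ in $\varphi_k(s_i)$ costs about $4n$ per copy.

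A better strategy is to use the conjugation structure and the parity twist between $s_iT_k$ and $T_k^{-1}s_i$. With it, adjacent copies of $T_k$ and $T_k^{-1}$ coming from consecutive letters cancel after only a few deletions. Concretely, I would prove an upper-bound recursion of the form
\[
\|\varphi_k(h)\|\le \|h\| + c_k,
\]
or better a bound on $\|\varphi^{(k)}(s_{k})\|$ obtained by deleting $O(n)$ letters at each level. Summing over the $2n-2$ levels then gives the quadratic bound $4n^2-8n+5$.

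For $g_n^2=\varphi^{(2)}(x_0^2)$, the key point is that $x_0^2$ produces the pattern $x_0T_3x_0T_3$. The two copies of $T_3$ can be paired against each other, and this pairing telescopes through all levels. I expect the result to be a cancellation sequence of length linear in $n$, roughly $4$ letters per pair of levels, giving $8n-4$. I would find these explicit sequences by computing small cases, $n=2,3$, and then generalising.

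The lower bounds are harder. I would use projections $\pi_A\colon\B F_\infty\to\B F(A)$ that kill all generators outside $A$; these do not increase $\|\cdot\|$. The notation $\pi_{\{j,j+1\}}\varphi^{(j+1)}\varphi_{j+1}\varphi_j(h)$ in the conventions suggests exactly this. Take a minimal cancellation system $(c,\C F)$ of the word for $\varphi^{(j)}(h)$. Group its letters by the level $k$ at which they were introduced. The aim is to show that for each pair of consecutive levels $j,j+1$, at least a linear-in-$n$ number of letters of $c$ must lie in $\{x_j,y_j,x_{j+1},y_{j+1}\}$; otherwise the projection onto those levels would not be trivialised. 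This amounts to computing $\|\cdot\|$ of explicit elements of $\B F(4)$, like $\pi_{\{j,j+1\}}\varphi_{j+1}\varphi_j(s)$, which are iterated commutators of the powers $(y_kx_k)^n$. For these I would use a length argument: deleting $m$ letters from $(x_ky_k)^n(x_k^{-1}y_k^{-1})^n$ leaves an element of reduced length at least $4n-Cm$, which forces $m\gtrsim n$.

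Summing over disjoint level-blocks gives $\sum\approx n^2-n+1$ for $g_n$. For $g_n^2$ the bound is only $2n$: we only need about one letter per level, since each level's $T_k$-contribution must be destroyed. That follows by induction with Lemma \ref{L:folding}, splitting off a folded pair and showing that the remaining projection is nontrivial.

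The main obstacle is the lower bound $\|g_n\|\ge n^2-n+1$. The difficulty is showing that deletions cannot be shared efficiently across levels. A single deleted letter at level $k$ propagates into images at all higher levels, so the counting must be done carefully via the projections $\pi_{\{j,j+1\}}$ and a precise computation of the norm in $\B F(4)$. I expect this to require a delicate induction, tracking how a minimal cancellation sequence restricted to each pair of levels remains a cancellation sequence of the projected word.
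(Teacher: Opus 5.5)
There is a genuine gap at the heart of the theorem, the quadratic lower bound $\|g_n\|\ge n^2-n+1$, and the mechanism you propose for it cannot work. Projecting a minimal cancellation sequence onto levels $j,j+1$ only guarantees $\|\pi_{\{j,j+1\}}(g_n)\|$ letters there, and this norm is bounded independently of $n$. Take $j$ odd. By Lemma~\ref{L:fi-pi} and $\rho_\infty\circ\varphi_\ell=\rho_\infty$, one gets $\pi_{\{j,j+1\}}(g_n)=\psi_j(x_0)=T_{j+1}\cdot\varphi_{j+1}(T_j)$. Here $\|T_{j+1}\|=2$. Also $\varphi_{j+1}(T_j)=[a,b]=a\cdot(a^{-1})^{b^{-1}}$ with $a=x_jT_{j+1}=x_jy_{j+1}^{-1}\cdot y_{j+1}^{\bullet}$, so $\|\varphi_{j+1}(T_j)\|\le 2\|a\|\le 6$. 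Summing over disjoint pairs of levels therefore gives only a linear bound; this is exactly how the paper proves the \emph{linear} estimate of Proposition~\ref{P:fiw-linear}. The length argument you meant to use to force $\gtrsim n$ deletions per level is false, since $T_k$ is a commutator with $\|T_k\|=2$. Concretely, deleting the first $x_k$ and the $x_k^{-1}$ in position $2n+1$ of $(x_ky_k)^n(x_k^{-1}y_k^{-1})^n$ leaves $(y_kx_k)^{n-1}(x_k^{-1}y_k^{-1})^{n-1}=1$.

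The missing idea is the splitting mechanism of Lemmas~\ref{L:fik-anti-triangle} and~\ref{L:intricate}. In a minimal cancellation system for $\varphi^{(k+1)}\varphi_j(w)$, either at least $4n$ letters of level $j$ are deleted, or a surviving letter of the last copy of $T_j$ is folded with a letter of an earlier copy. In the second case, Lemma~\ref{L:folding} and $\rho_k\varphi^{(k)}=\varphi^{(k)}\rho_k$ give $w=w_Lw_0w_R$ in $H$ with $w_0$ and $w_Lw_R$ both nontrivial and $\|\varphi^{(k-1)}(w)\|\ge\|\varphi^{(k+1)}(w_0)\|+\|\varphi^{(k+1)}(w_Lw_R)\|$. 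Put $k=2n-2m-1$. Since $\ell(w)$ is odd, one piece is odd and gives $m^2+m+1$ by induction. The other is even and gives $2m+2$ by Proposition~\ref{P:fiw-linear}, which yields $(m+1)^2+(m+1)+1$. That step needs the linear bound for \emph{every} nontrivial even-length $h\in H$, including $h\in[H,H]$, where single-level projections vanish. This is why the paper projects onto pairs of levels and proves $\psi_j$ injective (Lemma~\ref{L:psik}). The $2m+2$ letters gained at each step lie in $\varphi^{(2n-2m)}$ of the even piece, spread over all later levels, so no count localized at two levels can detect quadratic growth. For $\|g_n^2\|\ge 2n$ alone, your per-level intuition does work: $\pi_{\{j\}}(g_n^2)=T_j^2$ has norm $\ge 2$ for each $3\le j\le 2n$. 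Finally, the upper bounds are only announced; computing small cases is not a proof. What makes them work is Lemma~\ref{L:25}, especially $gT_khT_k=gy_k^{-1}\cdot(hx_k)^{\bullet}\cdot(y_kx_k^{-1})^{\bullet}$, applied after writing $x_0^2=(x_0x_1)(x_1^{-1}x_0)$ (Proposition~\ref{P:fiuu}). Then $\varphi_{i+1}\varphi_i(u)=uy_i^{-1}\cdot(y_iy_{i+1}^{-1})^{\bullet}\cdot y_{i+1}^{\bullet}$ gives an increment of $8k+4$ at the $k$-th step (Proposition~\ref{P:fiu}). This increment grows with $k$, rather than being a fixed $O(n)$ cost per level.
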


\subsection{The upper bounds}

\paragraph{Properties of the homomorphisms $\varphi_k$.}
We will often need the value of $\varphi_k$ on the product of two generators
and we start with a simple lemma which deals with the necessary cases.
Recall that $g^{\bullet}$ denotes some conjugate of $g$. 
We will use
the following simple property of the ``bullet notation''
$$
gh = h^{\bullet}g = hg^{\bullet}. 
$$

\begin{lemma}\label{L:25}
Let $g,h\in \B F_{\infty}$ and let $s_i,s_j\in S$ be generators. Then
\begin{enumerate}[label=(\arabic*)]
\item \label{L:25:gT}
For any $k\geq 1$,
$gT_k = gy_k^{-1}\cdot y_k^{\bullet}$;
\item \label{L:25:gThT}
For any $k\geq 1$,
$gT_khT_k 
= gy_k^{-1}\cdot(hx_k)^{\bullet}\cdot(y_kx_k^{-1})^{\bullet}$.

\item \label{L:25:phi ss even}
If $k\geq 1$ is even then $\varphi_k(s_i^{-1}s_j)=(s_i^{-1}s_j)^{\bullet}$.
\item 
\label{L:25:phi ss odd}
If $i,j\in \mathbb N$ have the same parity then
\begin{enumerate}[label=(\alph*)]
\item \label{L:25:phi ss odd:lower}
$\varphi_k(s_is_j^{-1}) = (s_is_j^{-1})^{\bullet}$ for any $k\geq 1$;
\item \label{L:25:phi ss odd:upper}
$\varphi^{(k)}(s_is_j^{-1}) = (s_is_j^{-1})^{\bullet}$ for any
$2\leq k\leq 2n$.
\end{enumerate}
\end{enumerate}
\end{lemma}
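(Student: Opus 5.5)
The plan is a direct computation from the two commutator expressions for $T_k$ in \eqref{Eq:2-commutators} and the definition \eqref{Eq:function-fi} of $\varphi_k$. Throughout I write $P=(y_kx_k)^n$ and $Q=P^{-1}$, and I use two facts: $a^{\bullet}$ followed by any homomorphism is again a conjugate of the image of $a$; and $ab=b^{\bullet}a=ab^{\bullet}$.

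For \ref{L:25:gT}, I would expand $T_k=[y_k^{-1},P]=y_k^{-1}Py_kP^{-1}=y_k^{-1}\cdot y_k^{Q}$. Multiplying on the left by $g$ gives $gT_k=gy_k^{-1}\cdot y_k^{\bullet}$ immediately.

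For \ref{L:25:gThT}, I would write the two copies of $T_k$ in different forms. The first copy is $y_k^{-1}y_k^{Q}$, as in \ref{L:25:gT}. The second copy uses the other commutator expression, $T_k=[x_k,P]=x_kPx_k^{-1}P^{-1}=x_k\cdot(x_k^{-1})^{Q}$. Setting $A=hx_k$, this gives
\[
gT_khT_k=gy_k^{-1}\cdot Q^{-1}y_kQ\,A\,Q^{-1}x_k^{-1}Q .
\]
The tail equals $Q^{-1}\bigl(y_k\,(QAQ^{-1})\,x_k^{-1}\bigr)Q$. Moving $y_k$ past $QAQ^{-1}$ gives $y_k(QAQ^{-1})=(QAQ^{-1})^{y_k^{-1}}y_k$. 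Hence the tail is $A^{\bullet}\cdot(y_kx_k^{-1})^{Q}$, which is the claimed identity. The only care needed here is choosing the right form of $T_k$ for each occurrence. This is the only step with a real idea in it, so it is the main (mild) obstacle.

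For \ref{L:25:phi ss even}, note that when $k$ is even, $\varphi_k(s)=sT_k$ for every generator $s$. Therefore $\varphi_k(s_i^{-1}s_j)=T_k^{-1}s_i^{-1}s_jT_k=(s_i^{-1}s_j)^{T_k}$.

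For \ref{L:25:phi ss odd:lower}, I would split into cases on the parities. If $k$ is even, or if $i,j$ are both even, then $\varphi_k(s_is_j^{-1})=s_iT_kT_k^{-1}s_j^{-1}=s_is_j^{-1}$. If $k$ is odd and $i,j$ are both odd, then $\varphi_k(s_is_j^{-1})=T_k^{-1}s_is_j^{-1}T_k$. Since $i$ and $j$ have the same parity, these cases are exhaustive.

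For \ref{L:25:phi ss odd:upper}, I would argue by downward induction on $k$. For $k=2n$ the map $\varphi^{(2n)}$ is the identity, so there is nothing to prove. For $k<2n$, write $\varphi^{(k)}=\varphi^{(k+1)}\circ\varphi_{k+1}$. By \ref{L:25:phi ss odd:lower}, $\varphi_{k+1}(s_is_j^{-1})=(s_is_j^{-1})^{u}$ for some $u$. Applying the homomorphism $\varphi^{(k+1)}$ gives $\bigl(\varphi^{(k+1)}(s_is_j^{-1})\bigr)^{\varphi^{(k+1)}(u)}$. By the inductive hypothesis $\varphi^{(k+1)}(s_is_j^{-1})$ is a conjugate of $s_is_j^{-1}$, so the whole expression is also a conjugate of $s_is_j^{-1}$.
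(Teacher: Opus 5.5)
Your proposal is correct and follows the paper's proof essentially step for step. You use the same two commutator forms $[y_k^{-1},(y_kx_k)^n]$ and $[x_k,(y_kx_k)^n]$ for the two copies of $T_k$ in (1)--(2), the same parity case split in (3) and (4)(a), and the same downward induction via $\varphi^{(k)}=\varphi^{(k+1)}\circ\varphi_{k+1}$ for (4)(b); the only difference is that you write out the conjugate-shuffling in (2) a little more explicitly than the paper does.
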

\begin{proof}
Let $z = (y_kx_k)^{n}$. Then
$$
gT_k = g\cdot [y_k^{-1},z] = gy_k^{-1}\cdot y_k^{z^{-1}}
$$
which proves \ref{L:25:gT}. 
\begin{align*}
gT_khT_k 
&= g\cdot[y_k^{-1},z]\cdot h\cdot[x_k,z]\\
&= gy_k^{-1}\cdot y_k^{z^{-1}}\cdot hx_k\cdot (x_k^{-1})^{z^{-1}}\\
&= gy_k^{-1}\cdot (hx_k)^{\bullet}\cdot (y_kx_k^{-1})^{z^{-1}}
\end{align*}
which proves \ref{L:25:gThT}. 
If $k$ is even then 
$\varphi_k(s_i^{-1}s_j) = T_k^{-1}s_i^{-1}s_jT_k=(s_i^{-1}s_j)^{\bullet}$ which proves \ref{L:25:phi ss even}.  
Finally, assume that $i$ and $j$ have the same parity. 
If $k$ is even or if $i$ and $j$ are even then
$$
\varphi_k(s_is_j^{-1}) = s_iT_kT_k^{-1}s_j^{-1}=s_is_j^{-1}.
$$
If $i,j,k$ are all odd then
$$
\varphi_k(s_is_j^{-1}) = T_k^{-1}s_is_j^{-1}T_k = (s_is_j^{-1})^{\bullet}
$$
which proves \ref{L:25:phi ss odd}\ref{L:25:phi ss odd:lower}. 
We prove \ref{L:25:phi ss odd}\ref{L:25:phi ss odd:upper} by downward induction on $2 \leq k \leq 2n$. Since 
$\varphi^{(2n)}={\rm Id}$ the base of the induction is true and the induction
step follows from \ref{L:25:phi ss odd}\ref{L:25:phi ss odd:lower}:
\[
\varphi^{(k)}(s_is_j^{-1})
=\varphi^{(k+1)}\varphi_{k+1}(s_is_j^{-1}) \\
=\varphi^{(k+1)}(s_is_j^{-1})^{\bullet}\\
=(s_is_j^{-1})^{\bullet}.
\]
\end{proof}

\paragraph{The linear upper bound in \eqref{Eq:norm-gn2}.}
\begin{proposition}\label{P:fiuu}
Let $u_0\in S$ be a generator with even index and let $u_1\in S$ be a generator with odd index. 
Then
$$
\|\varphi^{(2n-2k)}(u_0u_1^{\pm 1})\| \leq 4k+2,
$$
for any $0\leq k \leq n-1$.
In particular, for $k=n-1$ we get that for $g_n=\varphi^{(2)}(x_0)$
\[
\|g_n^2\|= \| \varphi^{(2)}(x_0^2)\| \leq \| \varphi^{(2)}(x_0 x_1)\| + \| \varphi^{(2)}(x_1^{-1}x_0) \| = \| \varphi^{(2)}(x_0 x_1)\| + \| \varphi^{(2)}(x_0x_1^{-1}) \|  \leq 8n-4
\]
which proves the upper bound in \eqref{Eq:norm-gn2}.
\end{proposition}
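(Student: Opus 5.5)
We argue by induction on $k$, proving both signs at once. For $k=0$ we have $\varphi^{(2n)}={\rm Id}$, so $\|u_0u_1^{\pm1}\|\le 2$. For the inductive step, set $m=2n-2k-1$, which is odd, so that $m+1=2n-2k$ is even and
$$
\varphi^{(2n-2k-2)}=\varphi^{(m+1)}\circ\varphi_{m+1}\circ\varphi_m .
$$
We compute $\varphi_{m+1}\varphi_m(u_0u_1^{\pm1})$ explicitly using \eqref{Eq:function-fi} and Lemma~\ref{L:25}. The aim is to write it as a product of three conjugates of two-letter words, and then bound $\|\varphi^{(m+1)}(\cdot)\|$ of each factor. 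We use subadditivity and conjugation invariance of $\|\cdot\|$, the fact that $\|h^{-1}\|=\|h\|$, and the fact that homomorphisms carry conjugates to conjugates.

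\emph{Case $u_0u_1$.} Here $u_0$ has even index and $u_1$ has odd index, with $m$ odd, so $\varphi_m(u_0)=u_0T_m$ and $\varphi_m(u_1)=T_m^{-1}u_1$. Hence $\varphi_m(u_0u_1)=u_0u_1$. Next $\varphi_{m+1}(u_0u_1)=u_0T_{m+1}u_1T_{m+1}$, and by Lemma~\ref{L:25}\ref{L:25:gThT} this equals
$$
u_0y_{m+1}^{-1}\cdot(u_1x_{m+1})^{\bullet}\cdot(y_{m+1}x_{m+1}^{-1})^{\bullet}.
$$
We now apply $\varphi^{(m+1)}$ and bound the three factors separately.
\begin{itemize}
\item The first factor has both letters of even index, so by Lemma~\ref{L:25}\ref{L:25:phi ss odd}\ref{L:25:phi ss odd:upper} its image is a conjugate of $u_0y_{m+1}^{-1}$ and has norm at most $2$.
\item The third factor is handled the same way and also has norm at most $2$.
\item For the middle factor, $u_1x_{m+1}$ is a conjugate of $x_{m+1}u_1$. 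Here $x_{m+1}$ has even index, so the induction hypothesis (with $2n-2k=m+1$) bounds its image by $4k+2$.
\end{itemize}
The total is $4k+6=4(k+1)+2$.

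\emph{Case $u_0u_1^{-1}$.} Now the cancellation happens at the first step instead of the second. We have $\varphi_m(u_0u_1^{-1})=u_0T_mu_1^{-1}T_m$, and by Lemma~\ref{L:25}\ref{L:25:gThT} this equals
$$
u_0y_m^{-1}\cdot(u_1^{-1}x_m)^{\bullet}\cdot(y_mx_m^{-1})^{\bullet}.
$$
We apply $\varphi_{m+1}$ (even index) and then $\varphi^{(m+1)}$ to each factor.
\begin{itemize}
\item For the first factor, $\varphi_{m+1}(u_0y_m^{-1})=u_0T_{m+1}T_{m+1}^{-1}y_m^{-1}=u_0y_m^{-1}$. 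This is an even-index generator times the inverse of an odd-index one, so the induction hypothesis bounds $\|\varphi^{(m+1)}(u_0y_m^{-1})\|$ by $4k+2$.
\item The second factor has both letters of odd index. Since $(u_1^{-1}x_m)^{-1}$ is a conjugate of $u_1x_m^{-1}$, Lemma~\ref{L:25}\ref{L:25:phi ss odd} (parts (a) then (b)) gives norm at most $2$ for its image.
\item The third factor is likewise of same-parity type and its image has norm at most $2$ by Lemma~\ref{L:25}\ref{L:25:phi ss odd}.
\end{itemize}
Again the total is at most $4k+6$, which completes the induction.

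The main point to get right is tracking the ordering $\varphi_m$ first, then $\varphi_{m+1}$, and choosing the correct form of Lemma~\ref{L:25} at each stage. The trick is that in each sign case exactly one factor reduces to the induction hypothesis, while the other two are same-parity words of norm at most $2$. The ``in particular'' statement then follows as displayed in the statement. It uses $x_0^2=x_0x_1\cdot x_1^{-1}x_0$, the fact that $x_1^{-1}x_0$ is a conjugate of $x_0x_1^{-1}$, and $k=n-1$, which gives $2(4n-2)=8n-4$.
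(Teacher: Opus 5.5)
Your proof is correct and matches the paper's: the same three-factor splitting from Lemma~\ref{L:25}\ref{L:25:gThT}, with one factor bounded by the induction hypothesis and the other two bounded by $2$ via Lemma~\ref{L:25}\ref{L:25:phi ss odd}. The only difference is bookkeeping. The paper runs a one-step induction on $\ell$ with separate bounds $4\lfloor(\ell+1)/2\rfloor+2$ and $4\lfloor\ell/2\rfloor+2$ for the two signs, whereas you compose $\varphi_{m+1}\circ\varphi_m$, induct on $k$ directly, and absorb the trivial step in which one of the two maps fixes the two-letter word.
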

\begin{proof}
We will prove the following two inequalities
\begin{align}
\|\varphi^{(2n-\ell)}(u_0u_1)\| &\leq 4 \left\lfloor \frac{\ell+1}{2}\right\rfloor + 2\label{Eq:[k+1]}\\
\|\varphi^{(2n-\ell)}(u_0u_1^{-1})\| &\leq 4 \left\lfloor \frac{\ell}{2}\right\rfloor + 2\label{Eq:[k]}.
\end{align}
for every $0\leq \ell\leq 2n-2$. Both are proven by induction on $\ell$. For $\ell=0$ we have
$\varphi^{(2n)}={\rm Id}$ and hence the base of the induction is true in both cases.
Assume both inequalities are true for some $0\leq \ell < 2n-2$.
We prove the induction step for \eqref{Eq:[k+1]}.
First,
\begin{align*}
\varphi^{(2n-\ell-1)}(u_0u_1) 
&= \varphi^{(2n-\ell)}\varphi_{2n-\ell}(u_0u_1).
\end{align*}
If $\ell$ is odd then 
$\varphi_{2n-\ell}(u_0u_1) = u_0T_{2n-\ell}T_{2n-\ell}^{-1}u_1=u_0u_1$ 
by definition and therefore by induction hypothesis,
\begin{equation}
\|\varphi^{(2n-\ell-1)}(u_0u_1)\| = \|\varphi^{(2n-\ell)}(u_0u_1)\| 
\leq 4 \left\lfloor \frac{\ell+1}{2}\right\rfloor +2 
=4 \left\lfloor \frac{\ell+2}{2}\right\rfloor+2.
\label{Eq:lodd}
\end{equation}
If $\ell$ is even then by Lemma \ref{L:25}\ref{L:25:gThT}
$$
\varphi_{2n-\ell}(u_0u_1)
=u_0T_{2n-\ell}u_1T_{2n-\ell} 
=u_0y_{2n-\ell}^{-1}\cdot (u_1x_{2n-\ell})^{\bullet}
\cdot (y_{2n-\ell}x_{2n-\ell}^{-1})^{\bullet}.
$$
For each of the factors on the right hand side we have the following estimates,
\begin{align*}
\|\varphi^{(2n-\ell)}(u_0y_{2n-\ell}^{-1})\| &\leq 2 
&\text{by Lemma \ref{L:25}\ref{L:25:phi ss odd}\ref{L:25:phi ss odd:upper}}\\
\|\varphi^{(2n-\ell)}(u_1x_{2n-\ell})^{\bullet}\| 
&\leq 4 \left\lfloor \frac{\ell+1}{2}\right\rfloor +2 &\text{by induction hypothesis}\\
\|\varphi^{(2n-\ell)}(y_{2n-\ell}x_{2n-\ell}^{-1})^{\bullet}\| 
&\leq 2 &\text{by Lemma \ref{L:25}\ref{L:25:phi ss odd}\ref{L:25:phi ss odd:upper}}.
\end{align*}
Consequently, we get
$$
\|\varphi^{(2n-\ell-1)}(u_0u_1)\| \leq 4 \left\lfloor \frac{\ell+1}{2}\right\rfloor +6
= 4 \left\lfloor \frac{\ell+2}{2}\right\rfloor + 2
$$
which, together with \eqref{Eq:lodd} 
proves the induction step for $\ell$ both odd and even, 
and hence proves~\eqref{Eq:[k+1]}.

The proof of the induction step for \eqref{Eq:[k]} is similar and we present
the main inequalities without going into the details.
\begin{align*}
\|\varphi^{(2n-\ell-1)}(u_0u_1^{-1})\|
&= \|\varphi^{(2n-\ell)}\varphi_{2n-\ell}(u_0u_1^{-1})\|\\
&=
\begin{cases}
\|\varphi^{(2n-\ell)}(u_0u_1^{-1}) \| &\text{if $\ell$ is even}\\
\|\varphi^{(2n-\ell)}(u_0y_{2n-\ell}{}^{-1}\cdot (u_1^{-1}x_{2n-\ell})^{\bullet}
\cdot (y_{2n-\ell}x_{2n-\ell}{}^{-1})^{\bullet}) \| 
&\text{if $\ell$ is odd}
\end{cases}
\\
& \leq 
\begin{cases}
4\left\lfloor\frac{\ell}{2}\right\rfloor +2  &\text{if $\ell$ is even}\\
4\left\lfloor \frac{\ell}{2}\right\rfloor + 6 
&\text{if $\ell$ is odd}
\end{cases}
\\
&= 4\left\lfloor \frac{\ell+1}{2}\right\rfloor + 2. 
\end{align*}
For $\ell = 2k$ both inequalities \eqref{Eq:[k+1]} and \eqref{Eq:[k]} 
yield
$$
\|\varphi^{(2n-2k)}(u_0u_1^{\pm 1})\|\leq 4k + 2
$$
which finishes the proof.
\end{proof}

\paragraph{The quadratic upper bound in \eqref{Eq:norm-gn}.}

\begin{proposition}\label{P:fiu}
Let $0\leq k\leq n-1$. If $u\in S$ is a generator with even index
then
\begin{equation}
\|\varphi^{(2n-2k)}(u)\| \leq 4k^2+1.
\label{Eq:fiu}
\end{equation}
In particular, for $k=n-1$ and $g_n=\varphi^{(2)}(x_0)$ defined in \eqref{Eq:gn}, we get that
\[
\|g_n\|=\|\varphi^{(2)}(x_0)\|\leq 4n^2-8n+5
\]
which  proves the upper bound of \eqref{Eq:norm-gn}.
\end{proposition}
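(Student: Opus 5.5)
Induction on $k$, with Proposition \ref{P:fiuu} supplying the mixed-parity estimates. For $k=0$ we have $\varphi^{(2n)}={\rm Id}$, so $\|u\|=1=4\cdot 0^2+1$.

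For the induction step, fix $u$ with even index and assume \eqref{Eq:fiu} holds for $k$ and for all even-indexed generators. Then
\[
\varphi^{(2n-2k-2)}(u)=\varphi^{(2n-2k)}\varphi_{2n-2k}\varphi_{2n-2k-1}(u).
\]
Set $a=2n-2k-1$ (odd) and $b=2n-2k$ (even). Since $u$ has even index, $\varphi_a(u)=uT_a$. Writing $T_a=(x_ay_a)^n(x_a^{-1}y_a^{-1})^n$, this is a word in $u$ and the odd-indexed generators $x_a^{\pm1},y_a^{\pm1}$. Applying $\varphi_b$ with $b$ even multiplies every generator on the right by $T_b$, so
\[
\varphi_b(s)=sT_b,\qquad \varphi_b(s^{-1})=T_b^{-1}s^{-1}.
\]
In $\varphi_b(uT_a)$, every adjacent pair $s^{\epsilon}s'^{\delta}$ of the original word becomes a product in which the pair is separated by a $T_b^{\pm1}$ or the $T_b$'s cancel.

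The plan is to regroup $\varphi_b(uT_a)$, in the spirit of Lemma \ref{L:25}\ref{L:25:gThT}, as a product of conjugates of a bounded number of factors:
\begin{enumerate}[label=(\roman*)]
\item \textbf{One copy of $u$.} This contributes at most $4k^2+1$ after applying $\varphi^{(b)}$, by the induction hypothesis.
\item \textbf{Mixed-parity two-letter words.} These are words like $u\,y_b^{-1}$ or $x_b\,x_a^{\pm1}$, pairing an even-indexed generator with an odd-indexed one. Each contributes at most $4k+2$ by Proposition \ref{P:fiuu}.
\item \textbf{Same-parity words $s_is_j^{-1}$.} Each contributes at most $2$ by Lemma \ref{L:25}\ref{L:25:phi ss odd}\ref{L:25:phi ss odd:upper}.
\end{enumerate}
The target is $4(k+1)^2+1-(4k^2+1)=8k+4$. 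That is, the overhead must be exactly two mixed-parity factors, $2(4k+2)=8k+4$, and no same-parity factors (or a trade-off giving the same total).

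The key step is therefore finding this decomposition, and it is the step I expect to be the main obstacle. Note that $T_a$ has norm $2$: it is the commutator $[x_a,z]=x_a\cdot(x_a^{-1})^{\bullet}$ with $z=(y_ax_a)^n$. So
\[
uT_a=u\,x_a\cdot(x_a^{-1})^{\bullet},\qquad\text{hence}\qquad \varphi^{(a)}(u)=\varphi^{(b)}\varphi_b(ux_a)\cdot\bigl(\varphi^{(b)}\varphi_b(x_a^{-1})\bigr)^{\bullet}.
\]
Now $\varphi_b(ux_a)=uT_bx_aT_b$, and Lemma \ref{L:25}\ref{L:25:gThT} gives
\[
uT_bx_aT_b=u\,y_b^{-1}\cdot(x_ax_b)^{\bullet}\cdot(y_bx_b^{-1})^{\bullet}.
\]
Applying $\varphi^{(b)}$ gives too many pieces. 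Instead, I would first try to absorb $u$ together with the $T_b$ and apply the induction hypothesis to $u$ directly. The idea is to write $uT_b$ as $u\cdot y_b^{-1}y_b^{\bullet}$, i.e. as $u$ times a norm-$2$ commutator, and then bound the remainder $x_aT_b\,(x_a^{-1})^{\bullet}$, after $\varphi^{(b)}$, by two mixed-parity terms $x_a x_b^{\pm1}$ or $x_ay_b^{\pm1}$. Each such term is bounded by $4k+2$ via Proposition \ref{P:fiuu}, since $b=2n-2k$ is exactly the right level. Concretely, I would aim for the identity
\[
\varphi_b\varphi_a(u)=u^{\bullet}\cdot(x_ay_b^{-1})^{\bullet}\cdot(y_bx_a^{-1})^{\bullet}
\]
or a close variant. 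This gives $4k^2+1+2(4k+2)=4(k+1)^2+1$, completing the induction.

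Finally, taking $k=n-1$ yields $\|g_n\|=\|\varphi^{(2)}(x_0)\|\le 4(n-1)^2+1=4n^2-8n+5$, which is the upper bound in \eqref{Eq:norm-gn}.
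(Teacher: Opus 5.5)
Your base case, the bookkeeping (an overhead of $8k+4$, i.e.\ two mixed-parity terms), and the specialisation $k=n-1$ are fine. The gap is that the induction step rests entirely on a decomposition you never establish, and the one you aim for,
\[
\varphi_b\varphi_a(u)=u^{\bullet}\cdot(x_ay_b^{-1})^{\bullet}\cdot(y_bx_a^{-1})^{\bullet},
\]
is false. Apply the homomorphism killing $u$ and $x_a$. The right side becomes $(y_b^{-1})^{\bullet}\,y_b^{\bullet}$, which has norm at most $2$. The left side $uT_b\,\varphi_b(T_a)$ becomes
\[
E=T_b\,(T_by_aT_b)^n\,(T_b^{-2}y_a^{-1})^n .
\]
Since $\pi_{\{b\}}(E)=T_b$ has norm $2$, a cancellation sequence of length $2$ for $E$ would have to consist of two letters $x_b^{\pm1},y_b^{\pm1}$ only. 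Read cyclically, however, $E$ consists of $n$ letters $y_a$ followed by $n$ letters $y_a^{-1}$, separated by blocks. These blocks represent $T_b^{2}$ (between consecutive $y_a$'s, and at the turn $y_a^{-1}\to y_a$), $T_b^{-1}$ (at the turn $y_a\to y_a^{-1}$), and $T_b^{-2}$ (between consecutive $y_a^{-1}$'s). Each block has norm at least $2$, so two deletions trivialise at most one block. The resulting word is then either cyclically reduced in $\langle y_a\rangle*\B F_{\{b\}}$, or collapses to a conjugate of $T_b^{2}$ or $T_b^{-1}$. Hence $\|E\|>2$ (in fact $\|E\|=4$), a contradiction. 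Separately, in your intermediate step the factor $\varphi_b(x_a^{-1})$ is $T_b^{-1}x_a^{-1}$, not $x_a^{-1}$.

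The missing idea is that the quadratic term should be carried not by $u$ but by the newly introduced even-indexed generator $y_b=y_{2n-2k}$. Use the form $T_a=[y_a^{-1},(y_ax_a)^n]$, so that Lemma \ref{L:25}\ref{L:25:gT} gives $\varphi_a(u)=uT_a=uy_a^{-1}\cdot y_a^{\bullet}$. Because $b$ is even, $\varphi_b(uy_a^{-1})=uT_bT_b^{-1}y_a^{-1}=uy_a^{-1}$, and $\varphi_b(y_a)=y_aT_b=y_ay_b^{-1}\cdot y_b^{\bullet}$. Therefore
\[
\varphi_b\varphi_a(u)=uy_a^{-1}\cdot(y_ay_b^{-1})^{\bullet}\cdot y_b^{\bullet}.
\]
Here $u$ is absorbed into the mixed-parity pair $uy_a^{-1}$, and the other mixed pair is $y_ay_b^{-1}$. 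By Proposition \ref{P:fiuu}, each of these contributes at most $4k+2$ after applying $\varphi^{(b)}$. The induction hypothesis is then applied to $y_b$, which is exactly why it must be formulated for every even-indexed generator. The total is $2(4k+2)+4k^2+1=4(k+1)^2+1$.
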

\begin{proof}
The proof is by induction on $k$. The base, for $k=0$, is trivially true
since $\varphi^{(2n)}={\rm Id}$. Assume the statement is true for $0\leq k <n-1$.
We have 
$$
\varphi^{(2n-2k-2)} = \varphi^{(2n-2k)}\circ \varphi_{2n-2k}\circ \varphi_{2n-2k-1}
$$
and we will analyse the values $(\varphi_{2n-2k}\circ \varphi_{2n-2k-1})(u)$.
Set $i=2n-2k-1$. 
Since the generator $u$ is of even index, by Lemma \ref{L:25}\ref{L:25:gT}
$$
\varphi_{2n-2k-1}(u) = uT_i = uy_i^{-1}\cdot y_i^{\bullet}.
$$
Furthermore, since $\varphi_{2n-2k}$ is a homomorphism
\begin{align*}
\varphi_{2n-2k}(uy_i^{-1}) &= uy_i^{-1} & & \text{by definition of $\varphi_{2n-2k}$} \\
\varphi_{2n-2k}(y_i^{\bullet}) &= \varphi_{2n-2k}(y_i)^{\bullet} = (y_iy_{i+1}^{-1})^{\bullet} \cdot (y_{i+1})^{\bullet} & & \text{by  Lemma \ref{L:25}\ref{L:25:gT}.}
\end{align*}
We have the following estimates
\begin{align*}
\|\varphi^{(2n-2k)}(uy_i^{-1})\| &\leq 4k+2 &&\text{by Proposition \ref{P:fiuu}}\\
\|\varphi^{(2n-2k)}(y_iy_{i+1}^{-1})^{\bullet}\| &\leq 4k+2 &&\text{by Proposition \ref{P:fiuu}}\\
\|\varphi^{(2n-2k)}(y_{i+1})^{\bullet}\| &\leq 4k^2+1 &&\text{by induction hypothesis}.
\end{align*}
It follows that
\begin{align*}
\|\varphi^{(2n-2k-2)}(u)\|
&\leq 4k+2 + 4k+2 + 4k^2+1 = 4(k+1)^2+1.
\end{align*}
This finishes the induction step and the proof of the main inequality.
\end{proof}

\subsection{The lower bounds}

\paragraph{The homomorphisms $\pi$ and $\rho$.}
The proofs of the lower bounds are more involved and require deeper understanding
of the homomorphisms $\varphi^{(k)}\colon \B F_{\infty}\to \B F_{\infty}$.
We start with introducing some auxiliary homomorphisms and some notation. 
Let $I\subseteq \mathbb N$ be 
a subset of non-negative integers. Let $S_I = \{x_i,y_i\in S\ |\ i\in I\}$
and 
\[ 
\B F_I = \B F(S_I)\leq \B F_{\infty}
\]
 be the free subgroup generated by~$S_I$.
For example, $T_k \in \B F_{\{ k\}}$.
Let $\pi_I\colon \B F_{\infty}\to \B F_I$ and 
$\widehat{\pi}_I\colon \B F_{\infty}\to \B F_{\mathbb N\setminus I}$
be surjective homomorphisms defined by
$$
\pi_I(s_i)=
\begin{cases}
s_i & \text{if $i\in I$}\\
1 & \text{if $i\notin I$}
\end{cases}
\qquad\text{and}\qquad
\widehat{\pi}_I(s_i)=
\begin{cases}
1 & \text{if $i\in I$}\\
s_i & \text{if $i\notin I$}.
\end{cases}
$$
For any $k\in \mathbb N\cup \{\infty\}$ let 
$\rho_k\colon \B F_{\infty}\to \B F_{\infty}$ be defined by
$$
\rho_k(s_i) =
\begin{cases}
x_{i\,{\rm mod}\,2} &\text{ if }i\leq k\\
s_i &\text{ if } i>k
\end{cases}
$$
Note that $\rho_{\infty}(s_i) = x_{i\,{\rm mod}\,2}$ for all $i \geq 0$.
In particular,
the image of $\rho_{\infty}$ is equal to the free subgroup of rank $2$ generated by $\{x_0,x_1\}$.
It follows directly from the definition that if $k>j$ then 
\begin{equation}\label{eqn:phi_k(T_j)}
\rho_k(T_j) =
\rho_k((x_jy_j)^n (y_jx_j)^{-n}) =
(x_{j \, {\rm mod} \, 2})^{2n} (x_{j \, {\rm mod} \, 2})^{-2n} =1.
\end{equation}

For any $a,b \in \mathbb N$ let $[a,b]$ denote the interval $\{a, a+1, \dots, b\}$ in $\mathbb{N}$.
We write $[a,b]_{\rm ev}$ and $[a,b]_{\rm odd}$ for the subset of even and odd numbers in $[a,b]$.
Given a finite $I \subseteq \mathbb N$ of the form $I=\{i_1< \dots < i_k\}$ and a collection $g_i \in \B F_\infty$ for each $i \in I$, we denote
\[
\prod_{i \in I^{\rm op}} g_i = g_{i_k} \cdots g_{i_2} \cdot g_{i_1},
\]
the product of the $g_{i_j}$'s in the ``reverse'' order.
For any integer $k$ denote by $k_{\rm ev}$ the smallest even integer $\geq k$ and by $k_{\rm odd}$ the smallest odd integer $\geq k$.

\begin{lemma}\label{L:fi(k)}
Let $2\leq k\leq 2n$. 
The value of the homomorphism $\varphi^{(k)}\colon \B F_\infty\to \B F_\infty$ on a generator $s_i\in S$ is given by
\begin{equation}
\varphi^{(k)}(s_i)=
\begin{cases}
s_i\cdot U_{k} & \text{if $i$ is even}\\
V_{k}^{-1} \cdot s_i\cdot W_{k} & \text{if $i$ is odd},
\end{cases}
\label{Eq:fi(k)}
\end{equation}
where
\begin{align*}
U_{k} 
&= 
\prod_{j \in [k+1,2n]^{\rm op}} \varphi^{(j)}(T_j) = 
\varphi^{(2n)}(T_{2n}) \cdot \varphi^{(2n-1)}(T_{2n-1}) \dots  \varphi^{(k+1)}(T_{k+1})
\\
V_{k}
&= 
\prod_{j \in [k+1,2n]_{\rm odd}^{\rm op}} \varphi^{(j)}(T_j) = 
\varphi^{(2n-1)}(T_{2n-1}) \cdot \varphi^{(2n-3)}(T_{2n-3}) \dots  \varphi^{((k+1)_{\rm odd})}(T_{(k+1)_{\rm odd}})
\\
W_{k} 
&= 
\prod_{j \in [k+1,2n]_{\rm ev}^{\rm op}} \varphi^{(j)}(T_j) = 
\varphi^{(2n)}(T_{2n}) \cdot \varphi^{(2n-2)}(T_{2n-2}) \dots  \varphi^{((k+1)_{\rm ev})}(T_{(k+1)_{\rm ev}})
\end{align*}
Moreover, $U_k,V_k,W_k\in \B F_{\{k+1,\ldots,2n\}}=\langle x_{k+1},y_{k+1},\ldots,x_{2n},y_{2n} \rangle \leq \B F_\infty$  and for every $k\in \mathbb N$ 
\[
\rho_{\infty}(U_k)=\rho_{\infty}(V_k)=\rho_{\infty}(W_k)=1.
\]

\end{lemma}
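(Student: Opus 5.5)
The plan is a downward induction on $k$, from $k=2n$ down to $k=2$, using $\varphi^{(k-1)}=\varphi^{(k)}\circ\varphi_k$.

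\emph{Base case.} For $k=2n$ the interval $[2n+1,2n]$ is empty. Hence $U_{2n}=V_{2n}=W_{2n}=1$, and \eqref{Eq:fi(k)} reduces to $\varphi^{(2n)}={\rm Id}$.

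\emph{Inductive step.} Assume \eqref{Eq:fi(k)} holds for some $3\leq k\leq 2n$ and compute $\varphi^{(k-1)}(s_i)=\varphi^{(k)}(\varphi_k(s_i))$ from \eqref{Eq:function-fi}. The computation splits according to the parities of $i$ and $k$.

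\begin{itemize}
\item $i$ even. Then $\varphi_k(s_i)=s_iT_k$, so
\[
\varphi^{(k-1)}(s_i)=s_iU_k\,\varphi^{(k)}(T_k).
\]
This equals $s_iU_{k-1}$, since $U_{k-1}=U_k\cdot\varphi^{(k)}(T_k)$ by the reverse-order product convention (the $j=k$ factor is appended on the right).
\item $i$ odd, $k$ even. Then $\varphi_k(s_i)=s_iT_k$, and
\[
\varphi^{(k-1)}(s_i)=V_k^{-1}s_iW_k\,\varphi^{(k)}(T_k).
\]
Here $W_{k-1}=W_k\varphi^{(k)}(T_k)$, because $k$ is even and is the smallest even element of $[k,2n]$. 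Also $V_{k-1}=V_k$, since $(k)_{\rm odd}=(k+1)_{\rm odd}$ when $k$ is even.
\item $i$ odd, $k$ odd. Then $\varphi_k(s_i)=T_k^{-1}s_i$, and
\[
\varphi^{(k-1)}(s_i)=\varphi^{(k)}(T_k)^{-1}V_k^{-1}s_iW_k=(V_k\varphi^{(k)}(T_k))^{-1}s_iW_k.
\]
Here $V_{k-1}=V_k\varphi^{(k)}(T_k)$ and $W_{k-1}=W_k$.
\end{itemize}

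The only care needed in this step is the bookkeeping of the reverse-order products and of the indices $(k)_{\rm ev}$ and $(k)_{\rm odd}$; I expect this to be the main, if mild, obstacle.

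\emph{Membership claim.} It suffices to show that $\varphi^{(j)}(T_j)\in\B F_{[j,2n]}$ for each $j\geq k+1$. We have $T_j\in\B F_{\{j\}}$, and each $\varphi_m$ maps $\B F(A)$ into $\B F(A\cup\{x_m,y_m\})$. Applying $\varphi_{j+1},\dots,\varphi_{2n}$ in turn therefore lands in $\B F_{[j,2n]}\subseteq\B F_{\{k+1,\dots,2n\}}$.

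\emph{Vanishing under $\rho_\infty$.} I would prove $\rho_\infty\circ\varphi_m=\rho_\infty$ for every $m\geq 1$. Indeed $\rho_\infty(T_m)=1$ by the same computation as \eqref{eqn:phi_k(T_j)}: $\rho_\infty$ sends both $x_m$ and $y_m$ to $x_{m\bmod 2}$. Consequently $\rho_\infty(\varphi_m(s_i))=\rho_\infty(s_i)$ on every generator. Iterating gives $\rho_\infty\varphi^{(j)}(T_j)=\rho_\infty(T_j)=1$. So each factor of $U_k$, $V_k$ and $W_k$ maps to $1$, and hence so does each product. (For $k$ outside the range $[2,2n]$ the products are empty or are defined the same way, and the claim holds trivially.)
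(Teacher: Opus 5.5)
Your proposal is correct and follows essentially the same route as the paper: downward induction via $\varphi^{(k-1)}=\varphi^{(k)}\circ\varphi_k$, the observation that each $\varphi_m$ only inserts letters $x_m^{\pm1},y_m^{\pm1}$ for the membership claim, and $\rho_\infty\circ\varphi_m=\rho_\infty$ together with $\rho_\infty(T_j)=1$ for the vanishing claim. You also correctly carry out the odd-index cases ($k$ even and $k$ odd), with the right bookkeeping of $V_{k-1}$ and $W_{k-1}$, which the paper leaves to the reader.
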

\begin{proof}
We use downward induction on $2 \leq k \leq 2n$.
The base $k=2n$ clearly holds since $\varphi^{(2n)}(s_i)=s_i$ (it is the identity map) and since $U_{2n}=V_{2n}=W_{2n}=1$.

Assume inductively the result holds for $2<k \leq 2n$ and we prove it for $k-1$.
Assume first that $i$ is even.
Then by the induction hypothesis
\[
\varphi^{(k-1)}(s_i) 
= 
\varphi^{(k)}(\varphi_k(s_i)) 
=
\varphi^{(k)}(s_iT_k)
=
\varphi^{(k)}(s_i)  \varphi^{(k)}(T_k)
=
s_i U_k \cdot \varphi^{(k)}(T_k)
=
s_i U_{k-1},
\]
which proves the induction step in this case.
The proof of the induction step if $i$ is odd is similar, distinguishing the cases that $k$ is even or odd.
The details are straightforward and left to the reader.

Since $\varphi_\ell(w)$ is obtained from a word $w$ by inserting the symbols $x_\ell^{\pm 1}$ and $y_\ell^{\pm 1}$ it is clear that for any $k+1 \leq j \leq 2n$
\[
\varphi^{(j)}(T_j) = \varphi_{2n} \dots \varphi_{j+1}(T_j) \in \B F_{\{ k+1, \dots,2n\}}
\]
and therefore $U_k, V_k, W_k \in \B F_{\{ k+1, \dots,2n\}}$.
For the final assertion of the lemma, observe that \eqref{eqn:phi_k(T_j)} implies that for any $\ell\geq 1$ and any $s_i \in S$
\[
\rho_\infty (\varphi_\ell(s_i))
=
\rho_{\infty}
\left(
\left\{
\begin{array}{ll}
s_iT_\ell & \text{if either $i$ or $\ell$ is even}
\\
T_\ell^{-1}s_i & \text{if both $i$ and $\ell$ are odd}
\end{array}
\right.
\right)
=\rho_\infty(s_i).
\]
It follows that $\rho_\infty \circ \varphi_\ell =\rho_\infty$ for any $\ell\geq 1$.
Then $\rho_\infty \varphi^{(j)} = \rho_\infty \varphi_{2n} \dots \varphi_{j+1}=\rho_\infty$, and it follows from \eqref{eqn:phi_k(T_j)} that for any $2 \leq j \leq 2n$
\[
\rho_\infty (\varphi^{(j)}(T_j)) = \rho_\infty(T_j)=1.
\]
Therefore $\rho_\infty(U_k), \rho_\infty(V_k), \rho_\infty(W_k)=1$ as required.
\end{proof}

The following lemma shows various relations between the homomorphisms $\varphi^{(k)}$
and the ones defined above.

\begin{lemma}\label{L:fi-pi}
Let $2\leq k\leq 2n$.
\begin{enumerate}[label=(\arabic*)]
\item \label{L:fi-pi:1}
$\pi_I\circ \varphi^{(k)}=\pi_I$ for any $I\subseteq\{0,1,\ldots,k\}$.
\item \label{L:fi-pi:2}
$\widehat{\pi}_{\{j\}}\circ\varphi^{(k)}\circ\varphi_j(g)=\varphi^{(k)}(g)$ for
any $1\leq j\leq k$ and any $g \in \B F_{\mathbb N\setminus\{j\}}$.
\item \label{L:fi-pi:3}
$\rho_k\circ\varphi^{(k)} = \varphi^{(k)}\circ \rho_k$.
\end{enumerate}
\end{lemma}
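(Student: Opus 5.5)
All three parts reduce to checking identities of homomorphisms on the generators $s_i\in S$. Since $\varphi^{(k)}=\varphi_{2n}\circ\dots\circ\varphi_{k+1}$, it is enough in (1) and (3) to prove the analogous statement for a single $\varphi_\ell$ with $\ell>k$, and then compose. For (2) I will use the explicit formula of Lemma~\ref{L:fi(k)}.

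\textbf{Part (1).} Let $I\subseteq\{0,\dots,k\}$ and $\ell\geq k+1$, so $\ell\notin I$ and hence $\pi_I(T_\ell)=1$. This is because $T_\ell\in\B F_{\{\ell\}}$. From the definition \eqref{Eq:function-fi}, $\varphi_\ell(s_i)$ is $s_i$ multiplied on one side by $T_\ell^{\pm1}$. Therefore $\pi_I\varphi_\ell(s_i)=\pi_I(s_i)$, that is, $\pi_I\circ\varphi_\ell=\pi_I$. Composing over $\ell=k+1,\dots,2n$ gives $\pi_I\circ\varphi^{(k)}=\pi_I$.

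\textbf{Part (3).} Fix $\ell>k$ and compare $\rho_k\varphi_\ell$ with $\varphi_\ell\rho_k$ on each generator $s_i$.
\begin{itemize}
\item If $i>k$, then $\rho_k(s_i)=s_i$. Also $\rho_k(T_\ell)=T_\ell$, because $\ell>k$. Both sides therefore equal $\varphi_\ell(s_i)$.
\item If $i\leq k$, then $\rho_k(s_i)=x_{i\bmod 2}$, an index with the same parity as $i$. Since the definition \eqref{Eq:function-fi} depends only on the parities of $i$ and $\ell$, $\varphi_\ell(x_{i\bmod 2})$ is $x_{i\bmod2}$ multiplied by $T_\ell^{\pm1}$ on the same side as in $\varphi_\ell(s_i)$. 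Again $\rho_k(T_\ell)=T_\ell$, so both sides agree.
\end{itemize}
Composing over $\ell=k+1,\dots,2n$ yields $\rho_k\circ\varphi^{(k)}=\varphi^{(k)}\circ\rho_k$.

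\textbf{Part (2).} Let $1\le j\le k$ and $g\in\B F_{\mathbb N\setminus\{j\}}$. It suffices to check the identity on generators $s_i$ with $i\neq j$.
\begin{itemize}
\item By definition, $\varphi_j(s_i)=s_iT_j$ or $T_j^{-1}s_i$, so $\varphi^{(k)}\varphi_j(s_i)$ equals $\varphi^{(k)}(s_i)\,\varphi^{(k)}(T_j)$ or $\varphi^{(k)}(T_j)^{-1}\varphi^{(k)}(s_i)$.
\item Since $j\le k$, Lemma~\ref{L:fi(k)} gives $\varphi^{(k)}(s)=s\,U_k$ or $V_k^{-1}sW_k$ for $s\in\{x_j,y_j\}$. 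Here $U_k,V_k,W_k\in\B F_{\{k+1,\dots,2n\}}$, and this subgroup is fixed by $\widehat\pi_{\{j\}}$.
\item Consequently $\widehat\pi_{\{j\}}\varphi^{(k)}(s)$ is $U_k$ or $V_k^{-1}W_k$, with no dependence on which of $x_j,y_j$ is $s$ beyond the parity of $j$. In other words, $\widehat\pi_{\{j\}}\circ\varphi^{(k)}$ sends $x_j$ and $y_j$ to the same element $c$. Now $T_j=(x_jy_j)^n(x_j^{-1}y_j^{-1})^n$ has total exponent zero in the letter $c$ after this substitution, so it maps to $c^{2n}c^{-2n}=1$.
\item For $i\neq j$, $s_i\in\B F_{\mathbb N\setminus\{j\}}$ and $\varphi^{(k)}(s_i)$ also lies in $\B F_{\mathbb N\setminus\{j\}}$, because $\varphi^{(k)}(s_i)$ involves only $s_i$ and indices $>k\geq j$. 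Hence $\widehat\pi_{\{j\}}$ fixes $\varphi^{(k)}(s_i)$.
\end{itemize}
Combining these, $\widehat\pi_{\{j\}}\varphi^{(k)}\varphi_j(s_i)=\varphi^{(k)}(s_i)$ for every $i\neq j$, which proves (2).

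The main point needing care is in (2): showing that $\widehat\pi_{\{j\}}\varphi^{(k)}$ kills $T_j$. This requires the observation that $x_j$ and $y_j$ have images of identical shape under Lemma~\ref{L:fi(k)}, with the same conjugating and multiplying factors.
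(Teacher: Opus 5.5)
Your proof is correct and follows the paper's argument in substance. The paper derives (1) and (3) from the closed formula $\varphi^{(k)}(s_i)=v\,s_i\,w$ of Lemma~\ref{L:fi(k)}, with $v,w\in\B F_{\{k+1,\ldots,2n\}}$ depending only on $k$ and the parity of $i$; that formula is just the composite of your one-step checks for each $\varphi_\ell$ with $\ell>k$. For (2) the paper kills $\widehat\pi_{\{j\}}\varphi^{(k)}(T_j)$ by writing $T_j$ as a product of conjugates of commutators $[x_j^{\pm1},y_j^{\pm1}]$, whereas you substitute the common image $c$ of $x_j$ and $y_j$ directly into $T_j$; this rests on the same observation, and you also make explicit that $\widehat\pi_{\{j\}}$ fixes $\varphi^{(k)}(s_i)$ for $i\neq j$, which the paper leaves implicit.
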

\begin{proof}
\ref{L:fi-pi:1} 
It suffices to prove that $\pi_I(\varphi^{(k)}(s))= \pi_I(s)$ for all $s \in S$.
It follows from Lemma \ref{L:fi(k)} that $\varphi^{(k)}(s) = vsw$ for some $v,w\in \B F_{\{k+1,\ldots,2n\}}$. 
Since
$I\subseteq\{0,\ldots,k\}$ it is clear that $\pi_I(v),\pi_I(w)=1$.
Hence
$$
\pi_I(\varphi^{(k)}(s))=\pi_I(vsw) = \pi_I(s).
$$

\noindent
\ref{L:fi-pi:2}
It suffices to prove the equality in the special case $g=s_i \in S$ where $i \neq~j$.
By construction $\varphi_j(s_i) = s_iT_j$ or $T_j^{-1}s_i$ depending on the parity of $i$ and $j$. 
Then $\varphi^{(k)}(\varphi_j(s_i))=\varphi^{(k)}(s_i) \cdot \varphi^{(k)}(T_j)$ or $\varphi^{(k)}(\varphi_j(s_i))= \varphi^{(k)}(T_j)^{-1} \cdot \varphi^{(k)}(s_i)$.
It remains to show that $\widehat{\pi}_{\{j\}}(\varphi^{(k)}(T_j))=1$.

Clearly $T_j=(x_jy_j)^n (y_jx_j)^{-n}$ is in the commutator subgroup of $\langle x_j,y_j \rangle$, so it is a product of conjugates of $[x_j^{\pm 1},y_j^{\pm 1}]$.
So it remains to show that $\widehat{\pi}_{\{j\}}(\varphi^{(k)}([x_j^{\pm 1},y_j^{\pm 1}]))=1$.
By Lemma \ref{L:fi(k)}, $\varphi^{(k)}(s_j) = vs_jw$ for some $v,w\in \B F_{\{k+1,\ldots,2n\}}$ which only depend on $k$ and $j \mod 2$. 
Then 
\[
\widehat{\pi}_{\{j\}}(\varphi^{(k)}([x_j^{\pm 1},y_j^{\pm 1}]))
=
\widehat{\pi}_{\{j\}} ([(vx_jw)^{\pm 1}, (vy_jw)^{\pm 1}]) 
=
[ \widehat{\pi}_{\{j\}}(vw)^{\pm 1} , \widehat{\pi}_{\{j\}}(vw)^{\pm 1} ] =1.
\]
\ref{L:fi-pi:3} 
Notice that $\rho_k(g) = g$ for any $g\in \B F_{\{k+1,\ldots,2n\}}$, by definition of $\rho_k$. 
By Lemma \ref{L:fi(k)}, for any $s_i \in S$ there exist $v,w \in \B F_{\{k+1,\dots,2n\}}$, which only depend on $k$ and $i \mod 2$, such that $\varphi^{(k)}(s_i)=vs_iw$.
Hence, $\rho_k(v)=v$ and $\rho_k(w)=w$.
Since $\rho_k(s_i)=x_{i \, {\rm mod} \, 2}$ if $i \leq k$ and $\rho_k(s_i)=s_i$ if $i>k$, the same lemma implies that $\varphi^{(k)}(\rho_k(s_i))=v \rho_k(s_i)w$.
We deduce that 
\[
\rho_k(\varphi^{(k)}(s_i))
=\rho_k(vs_iw)
=v\rho_k(s_i)w
=\varphi^{(k)}(\rho_k(s_i)).
\]
The result follows.
\end{proof}

\paragraph{The homomorphisms $\psi_k$.}
Let $H=\langle x_0,x_1\rangle\leq \B F_{\infty}$. 
For any {\em odd} integer $k \geq 3$ define a homomorphism $\psi_k\colon H\to \B F_{\infty}$ by
$$
\psi_k = \pi_{\{k,k+1\}}\circ \varphi_{k+1}\circ\varphi_{k}.
$$

Consider a free group on a set $T$ of generators.
If $g$ is a reduced word we can write it in the form $g=uvu^{-1}$ where $u,v$ are subwords of $g$ and $u$ has maximal length with respect to such presentation.
It is then clear that $g^n =uv^nu^{-1}$ and that if $n \neq 0$ then this is a reduced word which involves {\em exactly} the same letters in $T$ as $g$.
We also recall that the centraliser of any non-trivial element $g$ in a free group $\B F(T)$ is a cyclic group.
Since $1 \neq g \in C_{\B F(T)}(g)$, it follows from the argument above that every non-trivial element of $C_{\B F(T)}(g)$ involves exactly the same generators as $g$.

\begin{lemma}\label{L:psik}
The homomorphism $\psi_k$ is injective.
Its image is contained in the commutator subgroup
$[\B F_{\{k,k+1\}},\B F_{\{k,k+1\}}]$. In particular, every
element in the image of $\psi_k$ has even conjugation-invariant norm.
\end{lemma}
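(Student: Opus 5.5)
The plan is to compute $\psi_k$ explicitly on the two generators $x_0,x_1$ of $H$, and then reduce everything to elementary facts about two-generated subgroups of free groups.

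\textbf{Step 1: compute $\psi_k$.} Since $k$ is odd and $k+1$ is even, the definition \eqref{Eq:function-fi} gives $\varphi_k(x_0)=x_0T_k$ and $\varphi_k(x_1)=T_k^{-1}x_1$, and $\varphi_{k+1}(s)=sT_{k+1}$ for every $s\in S$. Applying $\varphi_{k+1}$ and then $\pi_{\{k,k+1\}}$, which kills $x_0,x_1$ and fixes everything in $\B F_{\{k,k+1\}}$, we get
\[
\psi_k(x_0)=AB,\qquad \psi_k(x_1)=B^{-1}A,
\]
where $A=T_{k+1}$ and $B=\varphi_{k+1}(T_k)$. Here $B$ is the word $T_k$ with each $x_k$ replaced by $x_kT_{k+1}$ and each $y_k$ replaced by $y_kT_{k+1}$; in particular $B\in \B F_{\{k,k+1\}}$.

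\textbf{Step 2: injectivity.} The key fact is that two non-commuting elements of a free group generate a free subgroup of rank $2$ having them as a free basis. This follows from Nielsen--Schreier (the subgroup is free of rank at most $2$ and is non-abelian) together with the Hopf property of $\B F(2)$.

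First we show that $A$ and $B$ do not commute. Both are non-trivial: $A$ is a non-trivial reduced word, and $\pi_{\{k\}}(B)=T_k\neq 1$. The element $B$ involves the letter $x_k$, while $A\in\B F_{\{k+1\}}$ does not. By the remark preceding the lemma, every non-trivial element of the cyclic centraliser $C(A)$ involves exactly the same generators as $A$, so $B\notin C(A)$. Hence $A,B$ is a free basis of $\langle A,B\rangle$.

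Next, inside the free group $\B F(A,B)$, the elements $AB$ and $B^{-1}A$ do not commute, because $AB\cdot B^{-1}A=A^2$ while $B^{-1}A\cdot AB=B^{-1}A^2B\neq A^2$. So $AB$ and $B^{-1}A$ again form a free basis of the subgroup they generate. Therefore $\psi_k$ maps the free basis $x_0,x_1$ of $H$ to a free basis of its image, and so it is injective. I expect the only real care needed is in justifying the non-commutation of $A$ and $B$; the generator-support argument via centralisers handles it.

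\textbf{Step 3: commutator subgroup and parity.} The element $A=T_{k+1}=[x_{k+1},(y_{k+1}x_{k+1})^n]$ lies in the commutator subgroup $[\B F_{\{k,k+1\}},\B F_{\{k,k+1\}}]$. The element $B$ is the image of the commutator $T_k$ under $\varphi_{k+1}$, which restricts to an endomorphism of $\B F_{\{k,k+1\}}$, so $B$ also lies in that commutator subgroup. Hence the whole image of $\psi_k$ lies there.

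Finally, consider the homomorphism $\epsilon\colon\B F_\infty\to\mathbb Z/2$ sending every generator to $1$. It sends every element of $\overline S$ to $1$, so $\|g\|\equiv\epsilon(g)\pmod 2$ for all $g$. Since $\epsilon$ vanishes on commutators, every element of the image of $\psi_k$ has even norm.
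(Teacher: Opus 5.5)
Your proposal is correct and follows essentially the same route as the paper. Both compute $\psi_k(x_0)=T_{k+1}\varphi_{k+1}(T_k)$ and $\psi_k(x_1)=\varphi_{k+1}(T_k)^{-1}T_{k+1}$, reduce injectivity via Nielsen--Schreier and the Hopf property to a non-commutation check settled by the generator-support property of centralisers, and obtain the rest from the fact that both images are commutators. The differences are cosmetic: the paper checks non-commutation in one step by computing $[\psi_k(x_0),\psi_k(x_1)]=[A^2,B^{-1}]$ and using $C(A^2)\subseteq \B F_{\{k+1\}}$, whereas you first show that $A,B$ is a free basis and then that $AB$ and $B^{-1}A$ do not commute. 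You also spell out the parity argument via the homomorphism to $\mathbb Z/2$, which the paper leaves implicit.
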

\begin{proof}
Since $k$ is odd and bigger than $1$ we have the following computation.
\begin{align*}
\psi_k(x_0) 
&=\pi_{\{k,k+1\}}(\varphi_{k+1}(\varphi_k(x_0)))
=\pi_{\{k,k+1\}}(\varphi_{k+1}(x_0\cdot T_k))\\
&=\pi_{\{k,k+1\}}(x_0\cdot T_{k+1}\varphi_{k+1}(T_k))
=T_{k+1}\cdot \varphi_{k+1}(T_k)\\
\psi_k(x_1) 
&=\pi_{\{k,k+1\}}(\varphi_{k+1}(\varphi_k(x_1)))
=\pi_{\{k,k+1\}}(\varphi_{k+1}(T_k^{-1}\cdot x_1))\\
&=\pi_{\{k,k+1\}}(\varphi_{k+1}(T_k^{-1})\cdot x_1\cdot T_{k+1})
=\varphi_{k+1}(T_k)^{-1}\cdot T_{k+1}.
\end{align*}
This shows that the image of $\psi_k$ is contained in the subgroup
generated by the elements
$T_{k+1},\varphi_{k+1}(T_k)\in \B F_{\{k,k+1\}}$. 
Since both of them are commutators, the last claims follow.

To show injectivity of $\psi_k$, recall that free groups of finite rank are Hopfian, see \cite[Theorem 2.13]{zbMATH02146477}. 
That is, any {\em proper} quotient of a free group of finite rank cannot be isomorphic to a free group of the same rank.
It follows that any free  proper quotient of $H\cong \B F(2)$ is either infinite cyclic or trivial. 
Hence, if $\psi_k$ was not injective then its image would be
abelian and we would have
$$
1=[\psi_k(x_0),\psi_k(x_1)] = \left[T_{k+1}^2,\varphi_{k+1}(T_k)^{-1}\right].
$$
Since $T_{k+1}^2\in \langle x_{k+1},y_{k+1}\rangle = \B F_{\{k+1\}}$, this implies that $\varphi_{k+1}(T_k) \in C_{\B F_\infty}(T_{k+1}^2) \subseteq \B F_{\{k+1\}}$.
This is a contradiction because $\pi_{\{k\}}(\varphi_{k+1}(s_k))=\pi_{\{k\}}(s_k T_{k+1})=s_k$ so $\pi_{\{k\}}(\varphi_{k+1}(T_k))=T_k \neq 1$ since $T_k=(x_ky_k)^n(y_kx_k)^{-n}$.
\end{proof}

\paragraph{The linear lower bound in \eqref{Eq:norm-gn2}.}

\begin{proposition}\label{P:fiw-linear}
Let $0\leq k\leq n-1$ and let $h\in H=\langle x_0,x_1\rangle\leq \B F_\infty$ 
be a non-trivial element of even length.
Then
$$
\|\varphi^{(2n-2k)}(h)\|\geq 2k+2.
$$
In particular, $\|g_n^2\|=\|\varphi^{(2)}(x_0^2)\|\geq 2n$, which is the lower bound in \eqref{Eq:norm-gn2}.
\end{proposition}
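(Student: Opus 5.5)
Induction on $k$; the engine is Lemma \ref{L:folding}, with the maps $\pi_{\{j,j+1\}}$ and $\widehat\pi$ used to split a minimal cancellation sequence into pieces.

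\textbf{Base case $k=0$.} Here $\varphi^{(2n)}={\rm Id}$ and $h\in H$ is non-trivial. The norm is non-zero, and its parity equals the parity of the word length of $h$, since deleting a cancellation sequence must leave an even-length word. Hence $\|h\|\geq 2$.

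\textbf{Inductive step.} Assume the bound holds for $k$, and let $h\in H$ be non-trivial of even length. Put $j=2n-2k-1$, which is odd and at least $3$ because $k<n-1$. Then
$$\varphi^{(2n-2k-2)}(h)=\varphi^{(j+1)}\bigl(\varphi_{j+1}\varphi_j(h)\bigr).$$
The aim is to show that any minimal cancellation sequence of a word representing this element splits into two parts.
\begin{itemize}
\item A part using letters with indices in $\{j,j+1\}$, of length at least $2$.
\item A part whose removal leaves something computing $\|\varphi^{(j+1)}(h')\|$ for some non-trivial $h'\in H$ of even length, which by induction has norm at least $2k+2$.
\end{itemize}
Together these give $\|\varphi^{(2n-2k-2)}(h)\|\geq 2k+4$.

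\textbf{First part.} Apply $\pi_{\{j,j+1\}}$, which does not increase the norm. By Lemma \ref{L:fi-pi}\ref{L:fi-pi:1} (note $\{j,j+1\}\subseteq\{0,\ldots,j+1\}$),
$$\pi_{\{j,j+1\}}\varphi^{(j+1)}\varphi_{j+1}\varphi_j(h)=\psi_j(h).$$
This is non-trivial by the injectivity in Lemma \ref{L:psik}. Its norm is even, again by Lemma \ref{L:psik}, so it is at least $2$. Therefore the letters of a minimal cancellation sequence $c$ with indices in $\{j,j+1\}$ number at least $2$, because deleting all other letters of $c$ gives a cancellation sequence for the projection.

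\textbf{Second part.} Apply $\widehat\pi_{\{j,j+1\}}$. Lemma \ref{L:fi-pi}\ref{L:fi-pi:2}, used twice, strips off $\varphi_{j+1}$ and then $\varphi_j$. This gives
$$\widehat\pi_{\{j,j+1\}}\varphi^{(j+1)}\varphi_{j+1}\varphi_j(h)=\varphi^{(j+1)}(h),$$
provided the $T_j$ factors created inside $\varphi_{j+1}\varphi_j$ are handled correctly. Since $\varphi^{(j+1)}=\varphi^{(2n-2k)}$, the induction hypothesis applies directly and gives the bound $2k+2$ for the letters of $c$ with indices outside $\{j,j+1\}$.

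The two sets of letters are disjoint, so the bounds add:
$$\ell(c)\geq 2+(2k+2).$$

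\textbf{Main obstacle.} The fiddly point is whether Lemma \ref{L:fi-pi}\ref{L:fi-pi:2} really applies after two steps. In $\varphi_{j+1}\varphi_j(h)$ the generator $s_j$ appears inside $\varphi_{j+1}(T_j)$. One must check that $\widehat\pi_{\{j,j+1\}}$ kills $\varphi^{(j+1)}\varphi_{j+1}(T_j)$. This holds because $T_j$ lies in the commutator subgroup of $\langle x_j,y_j\rangle$, and the lemma's argument shows each $\varphi^{(k)}(s_j)$ projects to the same element $\widehat\pi(vw)$ for both $x_j$ and $y_j$. So the argument of Lemma \ref{L:fi-pi}\ref{L:fi-pi:2} extends verbatim to $\widehat\pi_{\{j,j+1\}}$ applied to the composite.

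\textbf{Corollary.} Take $k=n-1$ and $h=x_0^2$, which is non-trivial of even length. This gives $\|g_n^2\|\geq 2n$.
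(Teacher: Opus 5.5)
Your proof is correct and follows the paper's argument essentially step for step. It uses induction on $k$ with $j=2n-2k-1$, and splits a minimal cancellation sequence via $\pi_{\{j,j+1\}}$, which yields $\psi_j(h)$ of norm at least $2$ by Lemma \ref{L:psik}, and via $\widehat{\pi}_{\{j,j+1\}}$, which yields $\varphi^{(2n-2k)}(h)$ by Lemma \ref{L:fi-pi}\ref{L:fi-pi:2} applied twice. Your ``main obstacle'' is not really one: write $\widehat{\pi}_{\{j,j+1\}}=\widehat{\pi}_{\{j\}}\widehat{\pi}_{\{j+1\}}$, and the lemma applies directly, first with $g=\varphi_j(h)\in\B F_{\mathbb N\setminus\{j+1\}}$ and then with $g=h$. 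Also, Lemma \ref{L:folding} plays no role in the argument, even though you call it the engine.
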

\begin{remark}
A similar linear bound holds for an $h$ of odd length, but we will
prove stronger quadratic estimate in Proposition \ref{P:final} below.
\end{remark}

\begin{proof}[Proof of Proposition \ref{P:fiw-linear}]
The proof is by induction on $k$. For $k=0$ the statement is
trivially true since $\varphi^{(2n)}={\rm Id}$ and $\|h\|\geq 2$, since
it is non-trivial and represented by a word of even length. Assume 
the result is true for $0\leq k\leq n-2$ and prove it for $k+1$.

Let $j=2n-2k-1$. 
Then 
$\varphi^{(2n-2k-2)} = \varphi^{(j+1)}\circ\varphi_{j+1}\circ\varphi_j$.
Let $1\neq h\in H$ and let $w$ be a word representing $\varphi^{(2n-2k-2)}(h)$.
Let $c$ be a minimal cancellation sequence for $w$. Then
$\pi_{\{j,j+1\}}(c)$ and $\widehat{\pi}_{\{j,j+1\}}(c)$ are cancellation
sequences for 
$\pi_{\{j,j+1\}}(w)$ and $\widehat{\pi}_{\{j,j+1\}}(w)$, respectively.
It follows that
\begin{equation}
\|w\| = \ell(c) = 
\ell(\pi_{\{j,j+1\}}(c))+ \ell(\widehat{\pi}_{\{j,j+1\}}(c))
\geq \|\pi_{\{j,j+1\}}(w)\|+ \|\widehat{\pi}_{\{j,j+1\}}(w)\|.
\label{Eq:norm-w}
\end{equation}
Furthermore,
\begin{align*}
\pi_{\{j,j+1\}}(w) 
&= \pi_{\{j,j+1\}}\varphi^{(j+1)}\varphi_{j+1}\varphi_j(h)\\
&= \pi_{\{j,j+1\}}\varphi_{j+1}\varphi_j(h) &\text{by Lemma \ref{L:fi-pi}\ref{L:fi-pi:1}}\\
&=\psi_j(h)
\end{align*}
and it follows from Lemma \ref{L:psik} that $\psi_j(h)$ is non-trivial
and its norm is even.
In particular, 
\begin{equation}
\|\pi_{\{j,j+1\}}(w)\|\geq 2.
\label{Eq:w>2}
\end{equation}
In the following computation we apply Lemma \ref{L:fi-pi}\ref{L:fi-pi:2}
twice, noting that $j \geq 1$.
\begin{align}
\widehat{\pi}_{\{j,j+1\}}(w) 
&=\widehat{\pi}_{\{j\}}\widehat{\pi}_{\{j+1\}}
\varphi^{(j+1)}\varphi_{j+1}\varphi_j(h)\nonumber\\
&=\widehat{\pi}_{\{j\}}\varphi^{(j+1)}\varphi_j(h)\nonumber \\
&=\varphi^{(j+1)}(h)=\varphi^{(2n-2k)}(h).
\label{Eq:piwfih}
\end{align}
Finally, we get the following computation which proves the induction step and finishes
the proof.
\begin{align*}
\|\varphi^{(2n-2k-2)}(h)\|
&=\|w\| 
\geq \|\pi_{\{j,j+1\}}(w)\|+ \|\widehat{\pi}_{\{j,j+1\}}(w)\| &\text{by \eqref{Eq:norm-w}}\\
&\geq 2 + \|\varphi^{(2n-2k)}(h)\| &\text{by \eqref{Eq:w>2} and \eqref{Eq:piwfih}}\\
&\geq 
2(k+1)+2 
&\text{by induction hypothesis}
\end{align*}
\end{proof}

\paragraph{Preparation for the proof of the quadratic lower bound in \eqref{Eq:norm-gn}.}

Let $2\leq k \leq 2n$ and let $w=t_1\dots t_m$ be a word in generators 
from $S_{\{0,\ldots,k\}}^{\pm 1}$.
Since $\varphi^{(k)}$ inserts elements $T_{\ell}$ with $\ell>k$, which are words in
$S_{\{k+1,\ldots ,2n\}}$, the element $\varphi^{(k)}(w)$ contains a unique copy
of $w$ as a subsequence.
Let $(c,\C F)$ be a cancellation system for $\varphi^{(k)}(w)$ with minimal $c$.
Suppose that the folding $\C F$ contains $\{t_i,t_j\}$, where $i<j$, from the subsequence 
$w$ of $\varphi^{(k)}(w)$
and let 
\begin{align*}
w_L &= t_1 \cdots t_{i-1}\\
w_0 &= t_i\cdots t_{j}\\
w_R &= t_{j+1} \cdots t_{m}.
\end{align*}
Recall the homomorphisms $\rho_k$.
We note that $\rho_{\infty}\colon \B F_{\infty}\to \langle x_0,x_1\rangle\leq \B F_{\infty}$ is a homomorphism defined by $\rho_{\infty}(s_i)=x_{i\,{\rm mod}\,2}$.

\begin{lemma}\label{L:fik-anti-triangle}
With the notation and assumptions above we have
$$
\|\varphi^{(k)}(w)\|\geq \|\varphi^{(k)}(\rho_{\infty}(w_0))\| + \|\varphi^{(k)}(\rho_{\infty}(w_Lw_R))\|.
$$
\end{lemma}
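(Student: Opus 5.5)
Since $\{t_i,t_j\}$ is a pair of the folding $\C F$, Lemma \ref{L:folding} applies to the word $\varphi^{(k)}(w)$, split at the positions of $t_i$ and $t_j$. Because $\varphi^{(k)}$ is a homomorphism, the word $\varphi^{(k)}(w)$ is the concatenation $\varphi^{(k)}(t_1)\cdots\varphi^{(k)}(t_m)$. By Lemma \ref{L:fi(k)}, each $\varphi^{(k)}(t_\ell)$ has the form $v_\ell t_\ell w_\ell$ with $v_\ell,w_\ell\in\B F_{\{k+1,\ldots,2n\}}$ depending only on the parity of the index of $t_\ell$ and on whether $t_\ell$ is a generator or an inverse. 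So the folded pair splits $\varphi^{(k)}(w)$ as $W_L\,W_0\,W_R$ with
\[
W_L=\varphi^{(k)}(w_L)\,v_i,\qquad W_0=t_iw_i\,\varphi^{(k)}(t_{i+1}\cdots t_{j-1})\,v_jt_j,\qquad W_R=w_j\,\varphi^{(k)}(w_R).
\]
Lemma \ref{L:folding} then gives $\|\varphi^{(k)}(w)\|\ge \|W_0\|+\|W_LW_R\|$. The remaining task is to identify these two norms with the ones in the statement, or bound them below by them.

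For $W_LW_R$ we have $W_LW_R=\varphi^{(k)}(w_L)\,v_iw_j\,\varphi^{(k)}(w_R)$. Since $s_{t_i}=s_{t_j}^{-1}$, both letters have the same index and $t_j=t_i^{-1}$. From the form $\varphi^{(k)}(s)=vsw$, inverting gives $\varphi^{(k)}(s^{-1})=w^{-1}s^{-1}v^{-1}$. Hence, for $t_i=s$ and $t_j=s^{-1}$, we get $w_j=v^{-1}$ and $v_i=v$, so $v_iw_j=1$. The case $t_i=s^{-1}$ gives $v_i=w^{-1}$ and $w_j=w$, which again cancel. Therefore $W_LW_R=\varphi^{(k)}(w_Lw_R)$. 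Similarly, $W_0$ equals $t_iw_i\,\varphi^{(k)}(t_{i+1}\cdots t_{j-1})\,v_jt_j$, and after conjugating by $t_iv_i$ this becomes $\varphi^{(k)}(w_0)$. So $\|W_0\|=\|\varphi^{(k)}(w_0)\|$.

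Next we replace $w$-pieces by their $\rho_\infty$ images. The key claim is that $\|\varphi^{(k)}(u)\|\ge\|\varphi^{(k)}(\rho_\infty(u))\|$ for any word $u$ in $S^{\pm1}_{\{0,\ldots,k\}}$. To prove it, apply the norm-nonincreasing homomorphism $\rho_k$; it is norm-nonincreasing because it maps generators to generators or to $1$, so it sends conjugates of generators to conjugates of generators. By Lemma \ref{L:fi-pi}\ref{L:fi-pi:3}, $\rho_k\varphi^{(k)}(u)=\varphi^{(k)}(\rho_k(u))$. Since all letters of $u$ have index $\le k$, $\rho_k(u)=\rho_\infty(u)$. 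Applying this to $u=w_0$ and $u=w_Lw_R$ yields the inequality in the statement.

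The main obstacle is the bookkeeping in the first step. One must check carefully that the conjugation and cancellation of the $v$'s and $w$'s are correct for both the generator and inverse cases, and that the decomposition used in Lemma \ref{L:folding} is legitimate. That lemma is stated for a word and its folding positions, and here those positions are the positions of $t_i,t_j$ inside the long word $\varphi^{(k)}(w)$. I would handle this by writing $\varphi^{(k)}(w)$ literally as the concatenated word and applying the lemma verbatim, which is valid because $c$ is minimal.
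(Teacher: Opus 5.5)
Your proposal is correct and follows the paper's proof essentially step for step. You split $\varphi^{(k)}(w)$ at the folded pair using Lemma \ref{L:fi(k)}, apply Lemma \ref{L:folding}, and then pass through the $1$-Lipschitz map $\rho_k$, which commutes with $\varphi^{(k)}$ by Lemma \ref{L:fi-pi}\ref{L:fi-pi:3} and agrees with $\rho_\infty$ on letters of index at most $k$. There is one cosmetic slip: $\varphi^{(k)}(w_0)=v_iW_0v_i^{-1}$, so the conjugating element is $v_i$ rather than $t_iv_i$, which does not affect the norm equality $\|W_0\|=\|\varphi^{(k)}(w_0)\|$.
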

\begin{proof}
Recall from Lemma \ref{L:fi(k)} that $\varphi^{(k)}(t_i)=gt_ih$, for some $g,h\in \B F_{\{k+1,\ldots,2n\}}$. 
Since $t_j=t_i^{-1}$, we have
$\varphi^{(k)}(t_j) = h^{-1}t_jg^{-1}$. Let $v_L,v_0$ and $v_R$ be the subwords of $\varphi^{(k)}(w)$ defined by
$$
\varphi^{(k)}(w) =
\underbrace{\varphi^{(k)}(t_1\cdots t_{i-1}) \cdot g}_{v_L}
\underbrace{t_ih \cdot \varphi^{(k)}(t_{i+1}\cdots t_{j-1}) \cdot h^{-1}t_j}_{v_0}
\underbrace{g^{-1}\cdot \varphi^{(k)}(t_{j+1}\cdots t_m)}_{v_R}.
$$
By Lemma \ref{L:folding}, 
$
\|\varphi^{(k)}(w)\| \geq \|v_0\| + \|v_L v_R\|$.
Also, we have the following computations
\begin{align*}
\rho_k(gv_0g^{-1}) 
&=\rho_k\left(gt_ih\varphi^{(k)}(t_{i+1}\cdots t_{j-1})h^{-1}t_jg^{-1}\right)\\
&=\rho_k\varphi^{(k)}(t_it_{i+1}\cdots t_{j-1}t_j)\\
&=\varphi^{(k)}\rho_k(t_i\cdots t_j) &\text{by Lemma \ref{L:fi-pi}\ref{L:fi-pi:3}}\\
&=\varphi^{(k)}(\rho_{\infty}(w_0)).
\\
\rho_k(v_Lv_R)
&=\rho_k(\varphi^{(k)}(t_1\cdots t_{i-1})g\cdot g^{-1}\varphi^{(k)}(t_{j+1}\cdots t_m))\\
&=\varphi^{(k)}\rho_k(t_1\cdots t_{i-1} \cdot t_{j+1}\cdots t_m)  &\text{by Lemma \ref{L:fi-pi}\ref{L:fi-pi:3}} \\
&=\varphi^{(k)}(\rho_{\infty}(w_Lw_R)).
\end{align*}
Since $\rho_k \colon \B F_\infty \to \B F_\infty$ carries a generator to a generator, it is Lipschitz with constant $1$.
Therefore,
\begin{align*}
\| \varphi^{(k)}(w) \| & \geq \| v_0 \| + \| v_L v_R\|
\\
& = 
\| gv_0g^{-1} \| + \| v_L v_R\|
\\
& \geq 
\| \rho_k(gv_0g^{-1}) \| + \| \rho_k(v_L v_R)\|
\\
& =
\| \varphi^{(k)}(\rho_{\infty}(w_0)) \| + \| \varphi^{(k)}(\rho_{\infty}(w_Lw_R))\|.
\end{align*}
This completes the proof.
\end{proof}

\begin{lemma}\label{L:intricate}
Let $3\leq k\leq 2n-1$ be an odd integer. Let $w\in H=\langle x_0,x_1\rangle$ be a non-trivial
reduced word which starts and ends with the same letter, including sign.
Then either
$$
\|\varphi^{(k-1)}(w)\|\geq 4n + \|\varphi^{(k+1)}(w)\|
$$
or there exist $w_0,w_L,w_R\in H$ such that 
\begin{enumerate}[label=(\alph*)]
\item $w=w_Lw_0w_R$; equality in $H$, not necessarily a decomposition of $w$ into the concatenation of three subwords. 

\item $w_0\neq 1$ and $w_Lw_R\neq 1$, and 
$$
\|\varphi^{(k-1)}(w)\|\geq  \|\varphi^{(k+1)}(w_0)\|+ \|\varphi^{(k+1)}(w_Lw_R)\|.
$$
\end{enumerate}
\end{lemma}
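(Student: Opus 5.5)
We take $j=k$ (odd) and write $\varphi^{(k-1)}=\varphi^{(k+1)}\circ\varphi_{k+1}\circ\varphi_k$. Let $W$ be the word representing $\varphi^{(k-1)}(w)$ obtained by substituting the defining formulas letter by letter. By Lemma \ref{L:fi(k)} applied with $k-1$, it contains the word $w$ as a subsequence. Fix a minimal cancellation system $(c,\C F)$ for $W$. The argument splits according to whether $\C F$ pairs two letters of this distinguished copy of $w$.

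\emph{Case 1: some pair $\{t_i,t_j\}\in\C F$ with $i<j$ consists of letters of the copy of $w$.} Apply Lemma \ref{L:fik-anti-triangle} with $k-1$ in place of $k$. Since $w\in H$, we have $\rho_\infty(w_0)=w_0$ and $\rho_\infty(w_Lw_R)=w_Lw_R$, where $w=w_Lw_0w_R$ is the concatenation. This gives
\[
\|\varphi^{(k-1)}(w)\|\ge \|\varphi^{(k-1)}(w_0)\|+\|\varphi^{(k-1)}(w_Lw_R)\|.
\]
Next, for any $h\in H$ we have $\|\varphi^{(k-1)}(h)\|\ge\|\varphi^{(k+1)}(h)\|$. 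This follows from the splitting \eqref{Eq:norm-w} combined with \eqref{Eq:piwfih}: $\widehat\pi_{\{k,k+1\}}\varphi^{(k-1)}(h)=\varphi^{(k+1)}(h)$, and $\widehat\pi$ is $1$-Lipschitz. That yields (b) provided $w_0\ne1$ and $w_Lw_R\ne1$ in $H$. Here $w_0$ begins with $t_i$ and ends with $t_j=t_i^{-1}$. Because $w$ is reduced, $w_0$ is a nontrivial reduced word unless $j=i+1$, which is impossible in a reduced word. If $w_Lw_R=1$, then $w=w_Lw_0w_L^{-1}$ with $w_L$ a prefix of $w$, so the first and last letters of $w$ would be mutually inverse. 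Either $w_L$ is empty, in which case $w=w_0$ starts with $t_i$ and ends with $t_i^{-1}$, or the first letter of $w_L$ is inverse to the last letter of $w$. Both contradict the hypothesis that $w$ starts and ends with the same letter. This is exactly where the hypothesis is used.

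\emph{Case 2: no two letters of the copy of $w$ are paired in $\C F$.} We aim for the first alternative. Again split $c$ via $\pi_{\{k,k+1\}}$ and $\widehat\pi_{\{k,k+1\}}$ as in \eqref{Eq:norm-w}:
\[
\|\varphi^{(k-1)}(w)\|\ge \ell(\pi_{\{k,k+1\}}(c))+\|\varphi^{(k+1)}(w)\|.
\]
It then suffices to show that $c$ contains at least $4n$ letters with index in $\{k,k+1\}$. Each letter $t$ of $w$ is either deleted (it lies in $c$) or paired with a letter of index $k$ or $k+1$ coming from an inserted $T$-block. Pairing with a letter of index in $[k+2,2n]$ is impossible, since letters of $w$ have index $0$ or $1$ and paired letters are mutually inverse. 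So $w$ must be killed by the $\{k,k+1\}$-letters and by $c$.

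The key step, and the main obstacle, is to make this count precise. We look at the projection $\pi_{\{0,1,k,k+1\}}$ of $W$, which equals $\pi_{\{0,1,k,k+1\}}\varphi_{k+1}\varphi_k(w)$ by Lemma \ref{L:fi-pi}\ref{L:fi-pi:1} applied to $\varphi^{(k+1)}$. This word is $w$ with the blocks $T_k^{\pm1}$ and $\varphi_{k+1}(T_k)^{\pm1},T_{k+1}^{\pm1}$ interleaved. Its image $c'$ of $c$ is a cancellation sequence whose folding pairs no two letters of $w$. Removing the letters of $w$ from the complement of $c'$ forces the remaining $\{k,k+1\}$-letters to be nearly entirely deleted. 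The reason is that $w$-letters cannot be paired with $k,k+1$-letters at all, since they have different generators. Hence every $w$-letter lies in $c'$, and the nesting structure of $\C F$ then separates the $T$-blocks between consecutive letters of $w$. So $\pi_{\{k,k+1\}}(c)$ must cancel the product of the blocks in nested segments.

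We then show, via the explicit formulas for $\psi_k(x_0),\psi_k(x_1)$ in Lemma \ref{L:psik}, that each such segment has norm at least $2$ and that the total is at least $4n$. We expect to use that $T_{k+1}=(x_{k+1}y_{k+1})^n(x_{k+1}^{-1}y_{k+1}^{-1})^n$ needs $2n$ deletions when it cannot cancel against anything outside its block. The careful bookkeeping in this last step is where we expect the difficulty. If it fails in general, we fall back to extracting a pair from the $T$-blocks and reducing to Case 1.
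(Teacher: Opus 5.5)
Your Case 1 is correct. Lemma \ref{L:fik-anti-triangle} at level $k-1$ works, as does the inequality $\|\varphi^{(k-1)}(h)\|\ge\|\varphi^{(k+1)}(h)\|$ obtained via $\widehat\pi_{\{k,k+1\}}$, and so does your first/last-letter argument for $w_0\neq1\neq w_Lw_R$.

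The gap is Case 2, where you aim for the first alternative. That alternative is false in general, so no bookkeeping can establish it. Take $w=x_0$ and $k=2n-1$.
\begin{itemize}
\item Case 1 is vacuous, since the copy of $w$ has only one letter.
\item That letter must lie in $c$. The only letters of index $0,1$ in $W$ are those of $w$, and paired letters are mutually inverse, so your remark that $w$-letters can pair with letters of index $k,k+1$ is wrong.
\item Proposition \ref{P:fiu} (with its parameter equal to $1$) gives $\|\varphi^{(2n-2)}(x_0)\|\le 5<4n+1=4n+\|\varphi^{(2n)}(x_0)\|$.
\end{itemize}
So $c$ contains at most $4$ letters of index in $\{k,k+1\}$, not $4n$. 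Your heuristic that $T_{k+1}$ ``needs $2n$ deletions'' is also false: $T_{k+1}=[x_{k+1},(y_{k+1}x_{k+1})^n]$ has norm $2$.

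For this $w$ the lemma can only hold through (b). A one-letter word has no splitting into subwords with $w_0\neq 1\neq w_Lw_R$. The decomposition must therefore be of a different kind, e.g.\ $w_L=x_0x_1^{-p}$, $w_0=x_1^{p}$, $w_R=1$ with $p$ odd. This is why the statement stresses that (a) is only an equality in $H$.

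The missing idea is to extract the decomposition from a pairing among the inserted $T$-letters, not among the letters of $w$. The paper proceeds as follows.
\begin{enumerate}
\item It first discards one level. It applies $\widehat\pi_{\{k+1\}}$ if $t_1=x_0$ and $\widehat\pi_{\{k\}}$ if $t_1=x_1$. This gives $\|\varphi^{(k-1)}(w)\|\ge\|\varphi^{(k+1)}\varphi_j(w)\|$, with $j\in\{k,k+1\}$ chosen so that $t_1=t_m=x_{(j+1)\bmod 2}$.
\item Then $\varphi_j(w)=t_1T'_1\cdots t_mT'_m$ ends with a full block $T'_m=T_j$, a reduced word of length $4n$. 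The dichotomy concerns this block, for a minimal $c$.
\item If $c$ contains at least $4n$ letters of index $j$, the first alternative follows, because $\widehat\pi_{\{j\}}\varphi^{(k+1)}\varphi_j(w)=\varphi^{(k+1)}(w)$.
\item Otherwise some letter of $T'_m$ survives. The rightmost such letter $t$ is paired with an index-$j$ letter $s$ further left. Lemma \ref{L:fik-anti-triangle}, applied at level $k+1$ to the word $\varphi_j(w)$ and the pair $\{s,t\}$, yields $w_L=\rho_\infty(v_L)$, $w_0=\rho_\infty(v_0)$, $w_R=\rho_\infty(v_R)$. One has $w_Lw_0w_R=w$ because $\rho_\infty(T_j)=1$.
\item Nontriviality follows from a parity count when $s$ lies in $T'_m$. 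Otherwise it follows from reducedness of words such as $x_{j\bmod 2}^{p}t_1\cdots t_i$ and $t_{i+1}\cdots t_m x_{j\bmod 2}^{q}$. This is exactly where $t_1=t_m\neq x_{j\bmod 2}^{\pm1}$ is used.
\end{enumerate}
Your closing fallback gestures at this, but it is not a reduction to your Case 1. Also, with $j=k$ fixed it breaks when $t_1=t_m=x_1$: then $\varphi_k(x_1)=T_k^{-1}x_1$, which leaves no block after $t_m$.
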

\begin{proof}
Let $w=t_1\dots t_m$, where $t_i\in \{x_0^{\pm 1},x_1^{\pm 1}\}$. 
After replacing $w$ with
$w^{-1}$, we can assume, without loss of generality, that $w$ starts and ends 
with either $x_0$ or $x_1$. The proof has
two parts depending on whether $t_1$ is equal to $x_0$ or $x_1$. 
We will conduct both cases in parallel as follows.

Since the homomorphisms
$\widehat{\pi}_{\{k\}}$ and $\widehat{\pi}_{\{k+1\}}$ 
map generators to generators, they are Lipschitz with
constant $1$, which is the first inequality in the following
parallel computation; The second equalities follow from Lemma \ref{L:fi-pi}\ref{L:fi-pi:2}
$$
\begin{array}{cc}
\parbox[t]{0.45 \textwidth}{
\begin{align*}
t_1 &=x_0\\
\hline\\
\|\varphi^{(k-1)}(w)\|
&\geq \|\widehat{\pi}_{\{k+1\}}\varphi^{(k-1)}(w)\|\\
&=\|\widehat{\pi}_{\{k+1\}}\varphi^{(k+1)}\varphi_{k+1}\varphi_k(w)\|\\
&=\|\varphi^{(k+1)}\varphi_k(w)\|.
\end{align*}
}&
\parbox[t]{0.45 \textwidth}{
\begin{align*}
t_1 &= x_1\\
\hline\\
\|\varphi^{(k-1)}(w)\|
&\geq \|\widehat{\pi}_{\{k\}}\varphi^{(k-1)}(w)\|\\
&=\|\widehat{\pi}_{\{k\}}\varphi^{(k)}\varphi_k(w)\|\\
&=\|\varphi^{(k)}(w)\|\\
&=\|\varphi^{(k+1)}\varphi_{k+1}(w)\|.
\end{align*}
}
\end{array}
$$
Let $j = k$ if $t_1=x_0$ and $j = k+1$ if $t_1=x_1$.
Recall that $k$ is odd, so 
\[
t_1 = t_m=x_{(j+1)\,{\rm mod}\,2}.
\]
In this notation both estimates above can be written together as
\begin{equation}
\|\varphi^{(k-1)}(w)\|\geq \|\varphi^{(k+1)}\varphi_{j}(w)\|
\label{Eq:j-notation}
\end{equation}
and we will now analyse the element $\varphi^{(k+1)}\varphi_{j}(w)$.

Let $(c,\C F)$ be a cancellation system for
$\varphi^{(k+1)}\varphi_{j}(w)$ 
with minimal $c$.
Then $\pi_{\{j\}}(c)$ and $\widehat{\pi}_{\{j\}}(c)$ are cancellation 
sequences for
$\pi_{\{j\}}(\varphi^{(k+1)}\varphi_{j}(w))$ and 
$\widehat{\pi}_{\{j\}}(\varphi^{(k+1)}\varphi_{j}(w))$, respectively.
Moreover, since $j=k$ or $j=k+1$, by Lemma \ref{L:fi-pi}\ref{L:fi-pi:2}
$$
\widehat{\pi}_{\{j\}}(\varphi^{(k+1)}\varphi_{j}(w)) 
=\varphi^{(k+1)}(w).
$$
It follows that
\[
\ell(\widehat{\pi}_{\{j\}}(c))\geq \|\varphi^{(k+1)}(w)\|.
\]
\subsubsection*{Case 1: $\ell(\pi_{\{j\}}(c))\geq 4n$.}
Then
\begin{align*}
\|\varphi^{(k-1)}(w)\|
&\geq\|\varphi^{(k+1)}\varphi_{j}(w)\| &\text{by \eqref{Eq:j-notation}} \\
&= \ell(c) = \ell(\pi_{\{j\}}(c)) + \ell(\widehat{\pi}_{\{j\}}(c))\\ 
&\geq 4n + \|\varphi^{(k+1)}(w)\|,
\end{align*}
which proves the first alternative in the statement of the lemma.

\subsubsection*{Case 2: {$\ell(\pi_{\{j\}}(c))< 4n$.}}
Recall that if $t_1=x_0$ then $j$ is odd and if $t_1=x_1$ then $j$ is even.
Since $\varphi_j(t_i) = t_iT_j$ or $T_j^{-1}t_i$, it follows that in both cases
$$
\varphi_j(w) = \varphi_j(t_1\cdots t_m) = t_1T'_1t_2T'_2 \cdots t_mT'_m,
$$
where $T'_i = T_j^{\pm 1}$ or $T'_i=1$, and $T'_m=T_j$.
Consequently,
$$
\varphi^{(k+1)}(\varphi_j(w)) = \varphi^{(k+1)}(t_1)\varphi^{(k+1)}(T'_1) \cdots 
\varphi^{(k+1)}(t_m)\varphi^{(k+1)}(T'_m).
$$
Since $T'_m=T_j$ is a subsequence in $\varphi^{(k+1)}(T'_m)$ and since 
$T_j=(x_jy_j)^n(x_j^{-1}y_j^{-1})^n$
is a reduced  word of length $4n$, the cancellation sequence $c$ must leave at least
one letter $x_j^{\pm 1}$ or $y_j^{\pm 1}$ in the subsequence $T'_m$ of $\varphi^{(k+1)}(T'_m)$.
Of these letters of $T'_m$ which do not belong to $c$, let $t$ denote the one
furthest to the right. The folding $\C F$ must pair $t$ with some letter $s$
of $\varphi^{(k+1)}(\varphi_j(w))$. Since $s=t^{-1}$ and since the letters
$x_j^{\pm 1}$ and $y_j^{\pm 1}$ in $\varphi^{(k+1)}(\varphi_j(w))$ only occur
in the subsequences $T'_i$, it follows that $s$ is a letter in $T_i'$ for
some $1\leq i\leq m$. 
Also $s$ must appear to the left of $t$ by the choice of $t$.

Let $v_0$ be the subword of $\varphi_{j}(w)$ between the letters 
$s$ and $t$, inclusive.
Let $v_L$ be the subword of $\varphi_{j}(w)$ consisting of all letters to the left of
$s$ and $v_R$, letters to the right of~$t$, so that $v_Lv_0v_R=\varphi_{j}(w)$.
Applying Lemma \ref{L:fik-anti-triangle} to $\varphi_j(w)$ and $\varphi^{(k+1)}(\varphi_j(w))$
and $(c,\C F)$ we obtain elements $w_L=\rho_{\infty}(v_L)$, $w_0=\rho_{\infty}(v_0)$ and
$w_R=\rho_{\infty}(v_R)$ in $H = \langle x_0,x_1 \rangle$ such that
\begin{equation}
\|\varphi^{(k+1)}(\varphi_{j}(w))\|
\geq \|\varphi^{(k+1)}(w_0)\|+ \|\varphi^{(k+1)}(w_Lw_R)\|.
\label{Eq:lemma-applied}
\end{equation}
Recall from \eqref{eqn:phi_k(T_j)} that $\rho_{\infty}(T_j) = 1$.
Then
\begin{align*}
w_Lw_0w_R &= \rho_{\infty}(v_Lv_0v_R)\\ 
&= \rho_{\infty}(\varphi_{j}(w))\\
&=\rho_{\infty}(t_1 T'_1 t_2 T'_2\cdots t_mT'_{m})\\
&=\rho_{\infty}(t_1\cdots t_m)\\
&=t_1\cdots t_m=w
\end{align*}
It follows from
\eqref{Eq:j-notation} and\eqref{Eq:lemma-applied} that
$$
\|\varphi^{(k-1)}(w)\| \geq
\|\varphi^{(k+1)}(\varphi_{j}(w))\|
\geq \|\varphi^{(k+1)}(w_0)\|+ \|\varphi^{(k+1)}(w_Lw_R)\|.
$$
It remains to prove that $w_0\neq 1$ and $w_Lw_R\neq 1$.
There are two cases depending on the position of the letter $s$ in
$\varphi_{j}(w) = t_1T_1' \cdots t_mT_j$.

\subsubsection*{Case 2.1: {\it The letter $s$ is in the last factor $T'_m$.}}

Recall that the reduced word $T'_m=T_j = (x_jy_j)^n(x_j^{-1}y_j^{-1})^n$
is of even length. Since $s=t^{-1}$ we
get that the subword $v_0$ of $T_j$ is of odd length and therefore so is 
$w_0=\rho_{\infty}(v_0)$ because $\rho_{\infty}$ maps generators to 
generators. 
It follows that $w_0 \neq 1$.

Let $T_j- v_0$ denote the word obtained by removing $v_0$ from $T_j$.
Then
$\rho_{\infty}(T_j- v_0) = x_{j\,{\rm mod}\,2}^p$, where $p$ is an odd integer. 
It follows that
$$
w_Lw_R = \rho_{\infty}(v_Lv_R) = \rho_{\infty}(t_1T_1'\cdots t_m \cdot (T_j-v_0))
=t_1\cdots t_m x_{j\,{\rm mod}\,2}^p.
$$
Since $t_1\cdots t_m$ is reduced and $t_m=x_{(j+1)\,{\rm mod}\,2}$, the word 
$w_Lw_R=t_1\cdots t_m x_{j\,{\rm mod}\,2}^p$ is also reduced and non-trivial,
hence $w_Lw_R\neq 1$ in $H$.

\subsubsection*{Case 2.2: {\it The letter $s$ is in $T'_i$, where $1\leq i\leq m-1$.}}
Write $T'_i=T^L_i\, s\, T^R_i$ and $T'_m=T^L_m\, t\, T^R_m$.
Then
\begin{align*}
\varphi_{j}(w) 
&= 
t_1T_1'\cdots t_i T_i't_{i+1} 
\cdots t_m  T_m'  
\\
&= 
t_1T_1'\cdots t_iT^L_i\, s\, T^R_i t_{i+1} \cdots t_m T^L_m\, t\, T^R_m.
\end{align*}
By construction 
\begin{align*}
v_L &= t_1T'_1\cdots t_iT^L_i\\
v_0 &= sT^R_it_{i+1}\cdots t_m T^L_mt\\
v_R &= T^R_m.
\end{align*}
Since $\rho_\infty(T_j)=1$ by \eqref{eqn:phi_k(T_j)}, it follows that
\begin{align*}
w_Lw_R &= \rho_{\infty}(v_Lv_R) =\rho_{\infty}(t_1T'_1\cdots t_i T^L_iT^R_m)\\
&=t_1\cdots t_i  \cdot \rho_{\infty}(T^L_iT^R_m) \\
&=t_1\cdots t_i \cdot x_{j\,{\rm mod}\,2}^p.
\end{align*}
Then $w_Lw_R$ is conjugate to $x_{j\,{\rm mod}\,2}^p \cdot t_1\cdots t_i$.
The latter word is reduced since $t_1=x_{(j+1)\,{\rm mod}\,2}$, and is  non-trivial since $i \geq 1$.
It follows that $w_Lw_R\neq 1$ in $H$. 
Similarly,
$$
w_0 = 
\rho_{\infty}(s\, T^R_it_{i+1}\cdots T_{m-1}t_mT^L_m\, t) 
= \rho_\infty(s) \cdot x_{j\,{\rm mod}\,2}^pt_{i+1}\cdots t_m x_{j\,{\rm mod}\,2}^q \cdot \rho_\infty(t)
$$
for some $p,q\in \mathbb Z$. 
Since $t=s^{-1}$ this word is conjugate to
$t_{i+1}\cdots t_m x_{j\,{\rm mod}\,2}^{p+q}$ which is reduced and non-trivial
since $i+1 \leq m$ and $t_m = x_{(j+1)\,{\rm mod}\,2}$. This implies that $w_0\neq 1$ and finishes
the proof of the lemma.
\end{proof}

\paragraph{The quadratic lower bound in \eqref{Eq:norm-gn}.}

\begin{proposition}\label{P:final}
Let $w\in H=\langle x_0,x_1\rangle$ be a word of odd length.
Then for any $0\leq k\leq n-1$
$$
\|\varphi^{(2n-2k)}(w)\| \geq k^2+k+1.
$$
In particular,
$$
\|g_n\| = \|\varphi^{(2)}(x_0)\| \geq n^2-n+1,
$$
which proves the lower bound in \eqref{Eq:norm-gn}.
\end{proposition}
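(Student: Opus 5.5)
We argue by induction on $k$, proving the statement simultaneously for all $w\in H$ of odd length. (Note that $k^2+k+1$ is odd, consistent with the parity of $\|\cdot\|$.) For $k=0$ we have $\varphi^{(2n)}={\rm Id}$, and a word of odd length is non-trivial, so $\|w\|\geq 1$.

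For the inductive step, assume the bound for $k$ and fix $w$ of odd length. Set $K=2n-2k-1$; this is odd, $3\leq K\leq 2n-1$ when $k\leq n-2$, and $\varphi^{(K-1)}=\varphi^{(2n-2k-2)}$ while $\varphi^{(K+1)}=\varphi^{(2n-2k)}$. Since $\|\cdot\|$ is conjugation invariant and each $\varphi^{(\cdot)}$ is a homomorphism, we may replace $w$ by a cyclically reduced conjugate. Its length is still odd, so it is non-trivial. To apply Lemma~\ref{L:intricate} we need the first and last letters to be equal, including sign. Cyclic reduction alone does not guarantee this. Since the length is odd, however, some cyclic rotation of a cyclically reduced word starts and ends with the same letter. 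In a cyclic word over $x_0^{\pm1},x_1^{\pm1}$ of odd length, not every adjacent pair can be distinct, since otherwise the letter types would alternate $x_0,x_1$ around an odd cycle. Hence some adjacent pair is equal: it is not an inverse pair, by cyclic reduction, and it must have the same type. Rotating so that this pair straddles the ends produces the required word.

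We now apply Lemma~\ref{L:intricate} with this $K$. In the first alternative, the induction hypothesis gives
\[
\|\varphi^{(2n-2k-2)}(w)\|\geq 4n+k^2+k+1 .
\]
It remains to check $4n\geq 2k+2$, which holds because $k\leq n-2$. In the second alternative, $w=w_Lw_0w_R$ in $H$ with $w_0\neq1$ and $w_Lw_R\neq1$. Exactly one of $w_0$ and $w_Lw_R$ has odd length, since the lengths mod $2$ are additive under $\rho_\infty$-type counting (exponent sum mod $2$ is a homomorphism $H\to\mathbb Z/2$). Apply the induction hypothesis to the odd one, giving at least $k^2+k+1$. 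For the even one, which is non-trivial, apply Proposition~\ref{P:fiw-linear}, giving at least $2k+2$. The sum is $k^2+3k+3=(k+1)^2+(k+1)+1$, as required.

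Throughout we work with elements rather than words, so the parity notion is the exponent sum mod $2$, which is well defined on $H$. The main delicacy is the reduction to a word with equal first and last letters; the rest is bookkeeping. The particular case $k=n-1$ with $w=x_0$ yields $\|g_n\|\geq n^2-n+1$.
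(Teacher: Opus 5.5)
Your proof is correct and follows the paper's argument: induction on $k$, passing to a cyclically reduced odd-length conjugate rotated to start and end with the same letter, then handling the two alternatives of Lemma~\ref{L:intricate} with the induction hypothesis and Proposition~\ref{P:fiw-linear}. You also spell out two steps the paper only asserts: the rotation (an odd cycle of letters cannot alternate between $x_0$-type and $x_1$-type) and the odd/even splitting of $w_0$ and $w_Lw_R$ via the exponent-sum homomorphism $H\to\mathbb Z/2$.
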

\begin{proof}
Use induction on $k$. 
For $k=0$ it is true, since $\varphi^{(2n)}={\rm Id}$ and $\|w\|\geq 1$.
Assume the statement is true for $0 \leq k\leq n-2$ and we will prove it for $k+1$.
Since the norm is conjugation invariant we may replace $w$ with a cyclically reduced word representing the same element in $H$, which is necessarily of odd length.
Since $w$ is cyclically reduced and since  $\ell(w)$ is odd, a cyclic permutation of $w$ must start and end with the same letter. 
So we may assume that $w$ starts and ends with the same letter.
By applying Lemma \ref{L:intricate} we get that either
\begin{align*}
\|\varphi^{(2n-2k-2)}(w)\|
&\geq 4n + \|\varphi^{(2n-2k)}(w)\|\\
&\geq 4n + k^2+k+1 &\text{by induction hypothesis}\\
&\geq 4(k+2) + k^2+k+1 &\text{because $k\leq n-2$}\\
&\geq (k+1)^2 + (k+1) + 1,
\end{align*}
which completes the induction, or there exist $w_0,w_L$ and $w_R$ in $H$ such that $w_0, w_Lw_R \neq 1$ and 
$w=w_Lw_0w_R$
and
$$
\|\varphi^{(2n-2k-2)}(w)\|\geq \|\varphi^{(2n-2k)}(w_0)\| + \|\varphi^{(2n-2k)}(w_Lw_R)\|.
$$
Since $\ell(w)$ is odd then one of $\ell(w_0)$ and $\ell(w_Lw_R)$ is odd and the other
is even. The induction hypothesis together with Proposition \ref{P:fiw-linear} imply that
$$
\|\varphi^{(2n-2k-2)}(w)\|\geq (k^2+k+1) + (2k+2) = (k+1)^2+(k+1)+1,
$$
which completes the induction step and finishes the proof.
\end{proof}

\section{The construction in $\B F(2)$}\label{SS:F2}
\begin{lemma}\label{L:transplant}
Let $\Sigma$ be a countable set.
For any integer $m\geq 2$ there exists a homomorphism $\Psi_m\colon \B F(\Sigma) \to \B F(2)$
such that
$$
\|\Psi_m(g)\| = \|g\|
$$
for every $g\in \B F(\Sigma)$ represented by a word of length at most $m$.
\end{lemma}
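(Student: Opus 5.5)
The plan is to send every generator of $\Sigma$ to a conjugate of a single generator of $\B F(2)=\B F(\{a,b\})$, with conjugators that are long compared with $m$ and pairwise very different. The upper bound $\|\Psi_m(g)\|\le \|g\|$ is then automatic. The lower bound will come from a cancellation-sequence argument via Lemma~\ref{L:cancel}.

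Enumerate $\Sigma=\{\sigma_1,\sigma_2,\dots\}$ and fix $K\gg m$, say $K=10m$. Define
$$\Psi_m(\sigma_i)=v_i^{-1}\,a\,v_i,\qquad v_i=b^{K}a\,b^{K+i}.$$
This choice has two useful features. The words $v_i^{-1}av_i$ are reduced and cyclically distinct. Moreover, two of them (or their inverses) can overlap only in a piece of length about $2K+O(1)$, which is less than half of each word (length about $4K$). So this is a $C'(\lambda)$-type small cancellation family.

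The upper bound is immediate. Each $\Psi_m(\sigma_i)^{\pm1}$ lies in $\overline{\{a,b\}}$, so by the triangle inequality and conjugation invariance $\|\Psi_m(g)\|\le\|g\|$ for every $g$.

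For the reverse inequality, let $w=t_1\cdots t_r$ be a reduced word of length $r\le m$ representing $g$. Let $W$ be the concatenation of the blocks $B_\ell=\Psi_m(t_\ell)$, and let $(c,\C F)$ be a cancellation system for $W$ with $\ell(c)=\|\Psi_m(g)\|\le \|g\|\le r\le m$. Call a block \emph{touched} if $c$ deletes at least one of its letters; there are at most $\ell(c)$ touched blocks. The aim is to show that deleting from $w$ the letters $t_\ell$ of the touched blocks gives a word trivial in $\B F(\Sigma)$. That exhibits a cancellation sequence of $w$ of length at most $\ell(c)$, so $\|g\|\le\|\Psi_m(g)\|$ by Lemma~\ref{L:cancel}.

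To prove this claim, look at the folding $\C F$ restricted to the letters of an untouched block $B_\ell$. Since $c$ removes fewer than $m\ll K$ letters overall, the long powers $b^{K}$ and $b^{K+i}$ in $B_\ell$ must be folded mostly against $b$-letters of other blocks. I would argue, using the nesting property of foldings as in Lemma~\ref{L:folding}, that the central letter $a$ of an untouched block $B_\ell$ has to be folded with the central $a^{-1}$ of some block $B_{\ell'}=B_\ell^{-1}$. This requires $t_{\ell'}=t_\ell^{-1}$, and the exponent $K+i$ pins down the index $i$. The folding pairs between these central letters are then nested and non-crossing, so the untouched letters of $w$ cancel in $\B F(\Sigma)$.

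The main obstacle is this last combinatorial step. The partially deleted residues of touched blocks sit between untouched ones and could, a priori, absorb long stretches of $b$'s or the central $a$'s. I would handle this with a counting argument. Every letter of a touched block that survives in the residue must itself be folded. A residue can supply only a bounded number of isolated $a^{\pm1}$'s, namely at most three per block plus whatever comes from deleted letters. Matching the central $a$ of an untouched block against a residue $a$ would force the adjacent $b^{K+i}$ and $b^{K}$ stretches to be paired across mismatched exponents. That mismatch would cost at least one deletion per unit of discrepancy, hence at least $K>m$ deletions, contradicting $\ell(c)\le m$. If this accounting proves delicate, my fallback is to make the exponents even more separated, for instance $v_i=b^{K}ab^{Ki}$, so that every mismatch costs at least $K$ deletions.
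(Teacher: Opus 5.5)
Your construction fails, exactly at the step you flagged. Since $v_{i+1}=b^Kab^{K+i+1}=v_i\,b$, you get $\Psi_m(\sigma_{i+1})=b^{-1}\Psi_m(\sigma_i)\,b$, so images of consecutive generators are conjugate by a single letter. Take $g=\sigma_2^2\sigma_1^{-2}$. It has length $4\le m$ (for $m\ge 4$), and $\|g\|=4$ because its image $2e_2-2e_1$ in the abelianisation has $\ell^1$-norm $4$. With $z=\Psi_m(\sigma_1)$ you get $\Psi_m(g)=b^{-1}z^2bz^{-2}=b^{-1}\cdot(z^2bz^{-2})$. This is a product of two elements of $\overline{\{a,b\}}$, so $\|\Psi_m(g)\|\le 2<\|g\|$. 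More generally $\|\Psi_m(\sigma_2^k\sigma_1^{-k})\|\le 2$ while $\|\sigma_2^k\sigma_1^{-k}\|=2k$. In your language: deleting the first letter of $B_1$ and the last letter of $B_2$ turns $B_1B_2$ into a word equal to $z^2$, which cancels against the untouched blocks $B_3B_4=z^{-2}$. But removing the touched letters $t_1t_2$ from $w$ leaves $\sigma_1^{-2}\neq 1$. So your key claim, that untouched blocks fold only against untouched inverse blocks, is false for this $\Psi_m$. Both heuristics behind it are also wrong. First, $\Psi_m(\sigma_i)$ is literally a subword of $\Psi_m(\sigma_{i+1})$, so there is no small-cancellation property. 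Second, an exponent mismatch $K+i$ versus $K+j$ costs only $|i-j|$ deletions, which can be $1$, not $K$.

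What is needed is that conjugators of distinct generators differ by long words. Your fallback $v_i=b^Kab^{Ki}$ has $v_i^{-1}v_j=b^{K(j-i)}$ and removes this particular obstruction. But the folding and counting argument would still have to be carried out; as written it is a heuristic, not a proof. The paper takes $\Psi_m(s_i)=b^{-m^i}ab^{m^i}$ and avoids foldings altogether. Let $g=s_{i_1}^{n_1}\cdots s_{i_k}^{n_k}$ be a counterexample with $|g|\le m$ minimal, in reduced syllable form. Its image is the reduced word $b^{\ell_0}a^{n_1}b^{\ell_1}\cdots a^{n_k}b^{\ell_k}$, with all $|\ell_j|\ge m$ and $\sum_j\ell_j=0$. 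A cancellation sequence of a reduced word must delete some syllable entirely, since partial deletions leave a reduced nontrivial word. Deleting a whole $b$-syllable costs at least $m+1>\|g\|$, which contradicts the $1$-Lipschitz bound. So some $a^{n_{k'}}$ is deleted entirely. Dropping $s_{i_{k'}}^{n_{k'}}$ from $g$ gives a shorter $g'$ with $\|g\|\le\|g'\|+|n_{k'}|$ and $\|\Psi_m(g')\|\le\|\Psi_m(g)\|-|n_{k'}|$. By minimality $\|\Psi_m(g')\|=\|g'\|$, hence $\|\Psi_m(g)\|\ge\|g\|$, a contradiction. The extra $a$ inside your $v_i$ would break this bookkeeping: an outer $a^{\pm1}$ of a block is a syllable whose deletion corresponds to no syllable of $g$, so it is better dropped.
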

\begin{proof}
Enumerate $\Sigma=\{s_1,s_2,\dots\}$. 
Let $a,b$ denote the generators of $\B F(2)$.
Define $\Psi_m\colon \B F(\Sigma) \to \B F(2)$
by
\begin{equation}
\Psi_m(s_i) = b^{-m^i} a b^{m^i}.
\label{Eq:transplant}
\end{equation}
Observe that $\|\Psi_m(s_i)\| = 1$ and hence $\Psi_m$ is Lipschitz with constant
$1$. That is, $\|\Psi_m(g)\|\leq \|g\|$ for every $g\in \B F(\Sigma)$. It remains
to prove the opposite inequality if $|g| \leq m$.

Suppose that there exists some $g \in \B F(\Sigma)$ represented by a word of length at most $m$ such that $\|\Psi_m(g)\|<\|g\|$.
Among such elements choose $g$ with minimal $|g|$.
Notice that $g$ must be non-trivial. 
Since $|g|\leq m$, we also have $\|g\|\leq m$. 
The reduced word representing $g$ has the form
\begin{equation}\label{Eq:g syllables}
g=s_{i_1}^{n_1}s_{i_2}^{n_2}\cdots s_{i_k}^{n_k},
\end{equation}
where $k \geq 1$, $0\neq n_j\in \B Z$ for $1\leq j\leq k$, $s_{i_j}\neq s_{i_{j+1}}$ for $1\leq j\leq k-1$, and $\sum_{j=1}^k |n_j|=|g| \leq m$.
Clearly,  $|n_j|\leq m$ for all $1 \leq j \leq k$.
The power $s_{i_j}^{n_j}$ is called a {\em syllable}. 
By construction,
\begin{align}
\Psi_m(g) 
&= \prod_{j=1}^k\ b^{-m^{i_j}}\cdot a^{n_j}\cdot b^{m^{i_j}}\nonumber\\
&=b^{\ell_0} a^{n_1} b^{\ell_1} a^{n_2} b^{\ell_2} \, \cdots \, b^{\ell_{k-1}}a^{n_k}b^{\ell_k}\label{Eq:psig},
\end{align}
where $\ell_0=-m^{i_1}$ and $\ell_j=m^{i_{j}}-m^{i_{j+1}}$ for $j=1,\ldots,k-1$, and $\ell_k=m^{i_k}$.
Notice that 
\[
\sum_{j=0}^k {\ell_j} = -m^{i_1} \, + \left(\sum_{j=1}^{k-1} m^{i_{j}}-m^{i_{j+1}}\right) \, + m^{i_k} =0.
\]
Since $s_{i_j} \neq s_{i_{j+1}}$, i.e. $i_j \neq i_{j+1}$ for all $1 \leq j \leq k-1$, and since $m \geq 2$, it follows that $|\ell_j| \geq m$ for all $0 \leq j \leq k$ because $|\ell_0| = m^{i_1}$ and $\ell_k = m^{i_k}$ and for $1 \leq j \leq k-1$
\[
| \ell_j| = |m^{i_j}-m^{i_{j+1}}| = m^{\min\{ i_j,i_{j+1}\}} \cdot  | m^{|i_j-i_{j+1}|}-1| \geq m^{\min\{ i_j,i_{j+1}\}} \geq m.
\]
In particular, the presentation of $\Psi_m(g)\in \B F(2)$ in \eqref{Eq:psig} is a reduced non-trivial word with all syllables
involving $b$ of length $ \geq m$. Since $k\geq 1$, the  expression
also has a syllable involving $a$.

Let $c$ be the minimal cancellation sequence for the reduced word \eqref{Eq:psig}.
It must contain at least one syllable. If it contains a syllable $b^{\ell_j}$ for
some $j=0,1,\ldots,k$, then it must contain at least another occurrence of $b^{\pm 1}$ since $|\ell_j| \neq 0$ and $\sum \ell_j = 0$.
Therefore 
$$
\|\Psi_m(g)\| = |c| \geq |\ell_j| + 1 > m  \geq  |g| \geq \|g\|, 
$$
which contradicts the fact that $\Psi_m$ is $1$-Lipschitz. So $c$ must contain
a syllable involving $a$, say $a^{n_{k'}}$ for some $k'\in\{1,\ldots,k\}$.
Let $c'$ be $c$ with the syllable $a^{n_{k'}}$ removed and consider $g' \in \B F(\Sigma)$ defined by 
\begin{equation}\label{Eq:g' syllable}
g' = s_{i_1}^{n_1}s_{i_2}^{n_2}\cdots \widehat{s_{i_{k'}}^{n_{k'}}}\cdots s_{i_k}^{n_k}.
\end{equation}
That is, $g'$ is obtained from $g$ by removing the syllable $s_{i_{k'}}^{n_{k'}}$ in the presentation \eqref{Eq:g syllables} of $g$.
The word in \eqref{Eq:g' syllable} representing $g'$ {\em need not be reduced}.
By the triangle inequality and since $g$ is the product of $g'$ and a conjugate of $s_{i_{k'}}^{n_{k'}}$
\begin{equation}
\|g\| \leq \|g'\|+\| s_{i_{k'}}^{n_{k'}} \| = \|g'\|+|n_{k'}|
\label{Eq:triangle-gg'}
\end{equation}
Since $|g'|<|g|$, by  minimality of $|g|$, 
\begin{equation}
\|\Psi_m(g')\| = \|g'\|.
\label{Eq:g'}
\end{equation}
Moreover, $c'$ is a cancellation sequence in the word
$$
\prod_{j=1}^{k'-1}\ b^{-m^{i_j}}\cdot a^{n_j}\cdot b^{m^{i_j}}\cdot
\prod_{j=k'+1}^{k}\ b^{-m^{i_j}}\cdot a^{n_j}\cdot b^{m^{i_j}}
$$
representing $\Psi_m(g')$. Combining the above observations we get the
following chain of inequalities which contradicts the initial assumption
that $\|\Psi_m(g)\|<\|g\|$.
\begin{align*}
\|\Psi_m(g)\| &= |c|\\ 
&= |c'|+|n_{k'}| \\
& \geq \|\Psi_m(g')\|+|n_{k'}| \\
&= \|g'\|+|n_{k'}| &\text{by \eqref{Eq:g'}}\\
&\geq \|g\| & \text{by \eqref{Eq:triangle-gg'}}
\end{align*}
This completes the proof.
\end{proof}

Let $n\geq 2$ and let $g_n\in \B F_{\infty}$ be the elements defined in \eqref{Eq:gn}.
Set 
\[
m_n=2(4n+1)^{2n+2}.
\] 
By \eqref{Eq:ell g_n} $g_n$ is represented by a word of length $\leq (4n+1)^{2n+2}$ so $g_n^2$ is represented by a word of length $\leq m_n$. 
We get the following consequence of Lemma \ref{L:transplant} and Theorem \ref{T:bounds}.

\begin{corollary}\label{C:F2}
In the above notation we have the following inequalities:
\begin{align*}
n^2-n+1 &\leq \|\Psi_{m_n}(g_n)\| = \|g_n\| \leq 4n^2-8n+5\\
2n &\leq \|\Psi_{m_n}(g_n^2)\|= \|g_n^2\| \leq 8n-4.
\end{align*}
\qed
\end{corollary}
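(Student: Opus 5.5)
The corollary follows by combining Lemma \ref{L:transplant}, applied with $\Sigma = S$ (the countable free basis of $\B F_\infty$) and $m = m_n$, with Theorem \ref{T:bounds}. The plan is to check that both $g_n$ and $g_n^2$ are represented by words of length at most $m_n$. Then Lemma \ref{L:transplant} gives $\|\Psi_{m_n}(g_n)\| = \|g_n\|$ and $\|\Psi_{m_n}(g_n^2)\| = \|g_n^2\|$. Substituting the four inequalities \eqref{Eq:norm-gn} and \eqref{Eq:norm-gn2} of Theorem \ref{T:bounds} then yields exactly the two displayed chains.

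For the length check, \eqref{Eq:ell g_n} gives $|g_n| \leq (4n+1)^{2n-2}$. This is certainly at most $(4n+1)^{2n+2}$, and in fact bounded by $m_n$ itself. Concatenating the reduced word for $g_n$ with itself gives a (possibly unreduced) word of length at most $2(4n+1)^{2n-2} \leq 2(4n+1)^{2n+2} = m_n$ representing $g_n^2$. Hence $|g_n^2| \leq m_n$.

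We also need the hypothesis of Lemma \ref{L:transplant}, namely ``represented by a word of length at most $m$''. The proof of that lemma only uses the reduced word, with the bound $|g| \leq m$. Since $|w| \leq \ell(w)$ for any word $w$, the reduced lengths satisfy the required bound. The remaining condition $m_n \geq 2$ is immediate.

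There is no real obstacle here. The only point needing care is that $\Psi_{m_n}$ is a homomorphism, so $\Psi_{m_n}(g_n^2) = \Psi_{m_n}(g_n)^2$. This is what later lets the squares of the images $\Psi_{m_n}(g_n)$ in $\B F(2)$ have sublinearly growing norm while the images themselves have quadratically growing norm.
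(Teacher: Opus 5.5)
Your proposal is correct and is the paper's argument. Use \eqref{Eq:ell g_n} to see that $g_n$ and $g_n^2$ are represented by words of length at most $m_n=2(4n+1)^{2n+2}$, apply Lemma \ref{L:transplant} with $\Sigma=S$ and $m=m_n$ to get $\|\Psi_{m_n}(g_n)\|=\|g_n\|$ and $\|\Psi_{m_n}(g_n^2)\|=\|g_n^2\|$, and then substitute the bounds of Theorem \ref{T:bounds}.
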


\section{Proof of Theorem \ref{T:main}}

Recall that the asymptotic cone ${\rm Cone}_{\omega}(G,d)$ of
a group $G$ with a bi-invariant metric $d$ is equipped
with the bi-invariant metric given by
$$
d_{\omega}([g_n],[h_n]) = \lim_{\omega}\frac{d(g_n,h_n)}{n}.
$$
The associated norm is given by $\|[g_n]\|_{\omega}=\lim_{\omega}\frac{\|g_n\|}{n}$.

Recall that a function $f \colon \mathbb N \to \mathbb R$ is asymptotically $\Theta(n)$ if it is both $O(n)$ and $\Omega(n)$, i.e.,
\[
0 < \liminf_{n \to \infty} \frac{f(n)}{n} \leq \limsup_{n \to \infty} \frac{f(n)}{n} < \infty.
\]

\begin{proof}[Proof of Theorem \ref{T:main}]
For $n\geq 1$, set
\[
r_n=\max\{2,\lfloor\sqrt n\rfloor\},\qquad h_n=g_{r_n}.
\]
Then $r_n\sim\sqrt n$, and Theorem~\ref{T:bounds} gives
$\|h_n\|=\Theta(n)$ and $\|h_n^2\|=O(\sqrt n)$, hence $\|h_n^2\|=o(n)$.
Consequently, it follows from Corollary \ref{C:F2} that
the elements $f_n=\Psi_{m_{r_n}}(h_n)\in \B F(2)$ satisfy
$\|f_n\| =\Theta(n)$ and $\|f_n^2\| =o(n)$. 
This implies
that $[f_n]\neq 1$ in the asymptotic cone ${\rm Cone}_{\omega}(\B F(2),d)$
and $[f_n]^2=[f^2_n]=1$. That is, $[f_n]$ is non-trivial of order $2$.

Let $S$ be any set of cardinality at least $2$.
Since any injective map $\{a,b\}\to S$ defines an injective homomorphism 
$\B F(2) \to \B F(S)$
which is an isometric embedding
with respect to the bi-invariant metrics associated
with $\{a,b\}$ and $S$, respectively, it induces an injective homomorphism between
the asymptotic cones which is also an isometric embedding. Consequently,
the image of $[f_n]$ in ${\rm Cone}_{\omega}(\B F(S),d_{\overline{S}})$
is a non-trivial element of order $2$.
\end{proof}

\end{document}